\theoremstyle{plain}
\newtheorem{theorem}{Theorem}[section]
\newtheorem{corollary}[theorem]{Corollary}
\newtheorem{lemma}[theorem]{Lemma}
\theoremstyle{definition}
\theoremstyle{remark}
\newtheorem{remark}[theorem]{Remark}
\numberwithin{equation}{section}
\numberwithin{table}{section}
\numberwithin{figure}{section} \setlength{\paperwidth}{210mm}
\newcommand{\diag}{\operatorname{diag}}
\begin{document}
\title[Unipotent Schwarz matrices]{%Unipotent Schwarz matrices and Bessel-like discrete orthogonal polynomials
$a$-potent Schwarz matrices and Bessel-like Jacobi polynomials}

\author{Alexander Dyachenko}
\address{Keldysh Institute of Applied Mathematics, Russian Academy of Sciences, 125047, Moscow,  Russia}
\email{diachenko@sfedu.ru}

\author{Carlos M. da Fonseca}
\address{Kuwait College of Science and Technology, Doha District, Safat
	13133, Kuwait}
\address{Chair of Computational Mathematics, University of Deusto, 48007 Bilbao, Spain}
\email{c.dafonseca@kcst.edu.kw}

\author{Mikhail Tyaglov}
\address{Department of Mathematics and Computer Sciences, St.Petersburg State University,
St.Petersburg, 199178, Russia}
\email{tyaglov@mail.ru}

\subjclass{15B05, 33C45, 15A18, 15A29, 15A15}
\date{\today}
\keywords{Inverse eigenvalue problem, Schwarz matrices, unipotent matrices, orthogonal polynomials}

\begin{abstract}
We consider the problem of the reconstruction of a Schwarz matrix
from exactly one given eigenvalue. This inverse eigenvalue problem leads to the Jacobi orthogonal polynomials~$\{P_k^{(-n,n)}\}_{k=0}^{n-1}$ that can be treated as a discrete finite analogue of Bessel polynomials.
\end{abstract}

\maketitle

\unitlength=1mm

%%%%%%%%%%%%%%%%%%%%%%%%%%%%%%%%%%%%%%%%%%%%%%%%%%%%%%%%%%%%%%%%%%%%%%%%%%
\section{Introduction}
%%%%%%%%%%%%%%%%%%%%%%%%%%%%%%%%%%%%%%%%%%%%%%%%%%%%%%%%%%%%%%%%%%%%%%%%%%

The matrix inverse eigenvalue problem consists in reconstructing a specific structured matrix from a prescribed spectral data. This topic is of great interest in different
branches of mathematics, continually providing new and surprising results (see for
example~\cite{C1998,JSW2009,SJA2021,S2019} and the references there).

% {\color{red}In general, an inverse eigenvalue problem concerns the reconstruction of a specific
% structured matrix from prescribed spectral data. This is a topic of great interest in different
% branches of mathematics, continually providing new and surprising results (see for
% example~\cite{C1998,JSW2009,SJA2021,S2019} and the references there).

% An $n\times n$ matrix is called \textit{unipotent} if its characteristic polynomial is  $(x-1)^n$. Therefore, all the eigenvalues of a unipotent matrix are equal to $1$. The study of unipotent matrices combines the operator theory, abstract algebra, and number theory among other classical mathematics disciplines (cf e.g.~\cite{CHJ2013,HXHY2019,M1998,Z1969}). In this paper, we consider an extension of this definition, considering a matrix with only a single eigenvalue.}

From the celebrated work~\cite{W1945} by Hubert S. Wall, it follows that there exists a \emph{unique} matrix
\begin{equation}\label{main.matrix}
J_{n}=
\begin{bmatrix}
    -b_0 & 1 & & & \\
    -b_1 & 0 & 1 & & \\
     &-b_2 & \ddots & \ddots & \\
     & & \ddots & \ddots & 1\\
     & & & -b_{n-1} & 0\\
\end{bmatrix}
\end{equation}
with $b_k>0$, for $k=0,1,\ldots,n-1$, whose characteristic polynomial is~$(x+1)^{n}$ (see also~\cite{S1956, T2012, W1948} for more details). Wall's argumentation is based on the stability of the characteristic polynomial, i.e.\ on that all its zeros have negative real parts --- which is of course true for~$(x+1)^{n}$.

By analogy with nilpotent matrices, one introduces \textit{unipotent} matrices: an $n\times n$ matrix is called unipotent if its characteristic polynomial is $(x-1)^n$, or, equivalently, if all its eigenvalues are equal to $1$. Various disciplines deal with unipotent matrices, cf.\ e.g.~\cite{CHJ2013,HXHY2019,M1998,Z1969}. We consider an extension of this definition, namely  matrices with only a single eigenvalue labelled by~$a$. In brief, we are looking for an ``$a$-potent'' matrix of the form~\eqref{main.matrix} where $a\in\mathbb{C}\setminus\{0\}$.

Matrices of the form \eqref{main.matrix} are called Schwarz matrices.
Such matrices are very important in automatic control theory and signal processing (see, for example,~\cite{CH2004}), and several general inverse problems have been previously considered in~\cite{Behn_et_al,CdFP2024,DJOvdD2000,EH2003,Garnett_Shader}. So, here we consider the inverse eigenvalue problem for a
Schwarz matrix whose characteristic polynomial is
\begin{equation}\label{main.poly}
p(z)=(z-a)^{n}=\sum_{i=0}^{n}(-1)^{n-i}\binom{n}{i}a^{n-i}z^i,
\end{equation}
where $a$ is a given nonzero complex number. That is, we want to construct a Schwarz matrix~\eqref{main.matrix} with only one eigenvalue $a$.

Since $J_k$ is the leading principal $k\times k$ submatrix of
$J_{n}$, with $k\in\{1,\ldots,n\}$, the polynomials
\begin{equation*}
P_k(z)=\det(zI_k-J_k)\, ,\quad \mbox{for}\; k=1,\ldots,n\, ,
\end{equation*}
where $I_k$ is the identity matrix of order $k$ and
$P_{n}(z)=p(z)$, satisfy the following three-term recurrence
relation:
\begin{eqnarray}
P_0(z) & = & 1\, , \nonumber\\
P_1(z) & = & z+b_0\, , \label{three-term.rec.rel.general}\\
P_{k+1}(z) & = & zP_k(z)+b_kP_{k-1}(z)\, ,\quad \mbox{for}\; k=1,\ldots,n-1\, .
\nonumber
\end{eqnarray}

By the Favard theorem, see%
\footnote{In~\cite{MA-N2001}, this extended version is attributed to Chihara, although in essence
it already existed in~\cite{Geronimus1940}.}
~\cite[pp.\ 21--22]{C1978}, the
sequence $\{ P_k(z)\}_{k=0}^{n-1}$ is orthogonal w.r.t. a linear
functional. Observe that, given arbitrary~$n>1$ and~$a\in\mathbb{C}\setminus\{0\}$, this functional is not positive and cannot be written as an integral w.r.t.\ a measure on the real line (otherwise~$J_n$ would have $n$ distinct real eigenvalues). Accordingly, for~$n>1$ the numbers~$b_1,\dots,b_{n-1}$ in~\eqref{three-term.rec.rel.general} cannot be chosen negative while~$b_0$ remains real.
As is seen from Section \ref{subsection:orthogonality}, the support of the orthogonality functional is natural to take consisting of a unique point $a$.

We find the aforementioned matrix~$J_n$, see the values of~$b_0$ and~$b_1,\dots,b_{n-1}$ in, resp.,~\eqref{coeff.0} and~\eqref{coeff.k.2}. The sequence $\{ P_k(z)\}_{k=0}^{n-1}$ satisfying~\eqref{three-term.rec.rel.general} reduces to a finite chunk of the Jacobi polynomials with non-standard parameters.
The explicit expression of the polynomial~$P_k(z)$ for~$a=1$ is given in~\eqref{eq:P.explicit.2F1}, while the relation of the polynomials with distinct~$a$ is in~\eqref{P.with.different.a}.
%
%For their explicit representation see formulae~\eqref{P.with.different.a} and~\eqref{eq:P.explicit.2F1}.

It turns out that three-term recurrence relations for the Bessel
polynomials~\cite{KrallFrink.1948} have a form similar to~\eqref{three-term.rec.rel.general},
so Bessel polynomials are also characteristic polynomials
of Schwarz matrices, and for $a=-\dfrac{1}{n}$ the polynomials~$P_{k}(z)$
approximate Bessel polynomials uniformly
on the compact sets of the complex plane as $n\to+\infty$. Moreover,
the support of the functional of orthogonality for Bessel
polynomials on the real line may also be treated as consisting of one point of essential singularity.
Thus, we can treat polynomials~$\{P_k(z)\}_{k=0}^{n-1}$ as
a discrete finite analogue of Bessel polynomials.

The paper is organised as follows. Section~\ref{Section:matrix} is devoted to finding the entries of the
matrix~$J_{n}$. In Sections~\ref{sect:P_k.basic.properties} and~\ref{subsection:orthogonality}, we
investigate properties of the orthogonal polynomials~$P_k(z)$
and find their functional of orthogonality on the real line.
In Section~\ref{section:Bessel}, we compare our polynomials with
Bessel polynomials and provide some calculations showing that
the zeroes of $P_{k}(z)$ approach the zeroes of Bessel polynomials
for $a=-\dfrac1n$ as $n\to+\infty$.
In Section~\ref{section:Zeroes}, we calculate zeroes of $P_{k}(z)$ for varying~$a$ and~$k$ to
estimate numerically their asymptotic behaviour as~$n$ is large.

%%%%%%%%%%%%%%%%%%%%%%%%%%%%%%%%%%%%%%%%%%%%%%%%%%%%%%%%%%%%%%%%%%%%%%%%%%%%%%%%%%%%
\section{Explicit formula for Schwarz matrix}\label{Section:matrix}
%%%%%%%%%%%%%%%%%%%%%%%%%%%%%%%%%%%%%%%%%%%%%%%%%%%%%%%%%%%%%%%%%%%%%%%%%%%%%%%%%%%%

To find the entries of the matrix $J_{n}$ or, similarly, the
coefficients of the three-term recurrence
relations~\eqref{three-term.rec.rel.general}, we consider the
rational function
\begin{equation}\label{rat.func.general}
R(z)=b_0((zI_{n}-J_{n})^{-1}e_1,e_1)=\dfrac{q(z)}{p(z)},
\end{equation}
where $e_1$ is the first coordinate vector. Here the polynomial
$q(z)$, multiplied by $b_0$, is the characteristic polynomial of the
principal submatrix of $J_{n}$~\eqref{main.matrix} constructed
from $J_{n}$ by deleting its first row and the first column:
\begin{equation*}
q(z)=b_0\det
\begin{bmatrix}
     z & -1 &  0 & & \\
    b_2 & z & -1 & &  \\
      &b_3 & \ddots &\ddots& \\
      & &\ddots& \ddots & -1\\
     & & & b_{n-1}&z\\
\end{bmatrix}.
\end{equation*}

Observe that the polynomial $p(z)$, the characteristic polynomial
of the matrix $J_{n}$, can be represented as follows
\begin{equation}\label{even.odd.parts}
p(z)=Q(z)+q(z)\, ,
\end{equation}
where
\begin{equation*}
Q(z)=\det
\begin{bmatrix}
     z & -1 &  0 & & \\
    b_1 & z & -1 & &  \\
      &b_2 & \ddots &\ddots& \\
      & &\ddots& \ddots & -1\\
     & & & b_{n-1} & z\\
\end{bmatrix}\, .
\end{equation*}
It is easy to see that $q(z)$ is an even polynomial whenever $n$ is
odd, and is an odd polynomial otherwise, while the
polynomial $Q(z)$ is odd if $n$ is odd, and is even if $n$ is even.
Therefore, by~\eqref{even.odd.parts}, the polynomial $q(z)$ is the
even (odd) part of the polynomial~$p(z)$ whenever $n$ is odd (even). This
means that $q(z)$ can be obtained from $p(z)$ in the following way
\begin{equation*}\label{poly.q}
q(z)=\dfrac{p(z)-(-1)^{n}p(-z)}{2}\,,
\end{equation*}
so that
\begin{eqnarray}
q(z) & = &
-\binom{n}{1}az^{n-1}-\binom{n}{3}a^3z^{n-3}-\binom{n}{5}a^5z^{n-5}-\cdots
\nonumber
\\ [3pt]
& = &
-\sum_{j=0}^{\left\lfloor\tfrac{n-1}2\right\rfloor}\binom{n}{2j+1}a^{2j+1}z^{n-2j-1}.
\label{second.poly}
\end{eqnarray}
For the calculation of the entries of the matrix $J_{n}$ we first study the rational
function~$R(z)$ defined in~\eqref{rat.func.general}.

\begin{theorem}\label{Theorem.rat.func}
The rational function $R(z)=\dfrac{q(z)}{p(z)}$ has the form
\begin{equation*}\label{rat.func}
R(z)=\dfrac{\alpha_1}{z-a}+\dfrac{\alpha_1}{(z-a)^2}+\cdots+\dfrac{\alpha_n}{(z-a)^{n}}=
\dfrac{s_0}{z}+\dfrac{s_1}{z^2}+\dfrac{s_2}{z^3}+\cdots,
\end{equation*}
where
\begin{equation}\label{alpha_k}
\alpha_k=-2^{k-1}a^{k}\binom{n}{k}\, , \quad \mbox{for}\; k=1,\ldots,n\, ,
\end{equation}
and
\begin{equation}\label{s_k}
s_m=\sum\limits_{k=1}^{n}a^{m-k+1}\binom{m}{k-1}\alpha_{k}=-a^{m+1}\sum\limits_{k=1}^{n}2^{k-1}\binom{m}{k-1}\binom{n}{k}.
\end{equation}
\end{theorem}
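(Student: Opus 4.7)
The plan is to derive both representations directly from a closed form for $q(z)$. Applying the identity $q(z) = \tfrac{1}{2}[p(z)-(-1)^n p(-z)]$ (established just above the theorem) to $p(z) = (z-a)^n$, and using $(-1)^n(-z-a)^n = (z+a)^n$, I obtain
\[
q(z) \;=\; \frac{(z-a)^n - (z+a)^n}{2}.
\]
Since $\deg q \leq n-1$ and $p(z)=(z-a)^n$, the rational function $R(z)=q(z)/p(z)$ admits a partial-fraction expansion $R(z) = \sum_{k=1}^n \alpha_k (z-a)^{-k}$, with coefficients recovered from the Taylor expansion of $q$ at $a$ via $\alpha_k = q^{(n-k)}(a)/(n-k)!$. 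Differentiating the closed form $j$ times gives
\[
q^{(j)}(z) \;=\; \frac{n!}{2(n-j)!}\bigl[(z-a)^{n-j} - (z+a)^{n-j}\bigr],
\]
so $q^{(j)}(a) = -\tfrac{n!}{2(n-j)!}(2a)^{n-j}$. Substituting $j = n-k$ yields $\alpha_k = -2^{k-1} a^k \binom{n}{k}$, proving~\eqref{alpha_k}.

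For the Laurent expansion at infinity I would expand each summand via the negative-binomial series
\[
(z-a)^{-k} \;=\; \sum_{m \geq 0} \binom{m+k-1}{k-1} a^m z^{-m-k},
\]
valid in $|z|>|a|$, then reindex and collect powers of $z^{-1}$. The coefficient of $z^{-m-1}$ in $R(z)$ works out to
\[
s_m \;=\; \sum_{k=1}^{\min(m+1,\,n)} \alpha_k \binom{m}{k-1} a^{m-k+1} \;=\; \sum_{k=1}^n \alpha_k \binom{m}{k-1} a^{m-k+1},
\]
where the upper limit is extended to $n$ using $\binom{m}{k-1} = 0$ for $k > m+1$. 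Substituting the values of $\alpha_k$ found above produces the second equality in~\eqref{s_k}.

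I do not expect a genuine obstacle: the pivotal simplification is the closed form $q(z) = \tfrac{1}{2}[(z-a)^n - (z+a)^n]$, which collapses both the Taylor coefficients at $a$ and the reindexing at infinity into one-line calculations. The only care needed is routine bookkeeping of binomial indices when passing between the two expansions.
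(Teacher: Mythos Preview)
Your proof is correct and in fact cleaner than the paper's. The paper also starts from $\alpha_k=q^{(n-k)}(a)/(n-k)!$, but then differentiates the \emph{series} form $q(z)=-\sum_{j}\binom{n}{2j+1}a^{2j+1}z^{n-2j-1}$ term by term, arriving at $\alpha_k=-a^k\binom{n}{k}\sum_{j}\binom{k}{2j+1}$, and only then invokes the auxiliary identity $\sum_{j}\binom{k}{2j+1}=2^{k-1}$ to reach~\eqref{alpha_k}. Your use of the closed form $q(z)=\tfrac12[(z-a)^n-(z+a)^n]$ makes the $(z-a)^{n-j}$ term vanish at $z=a$ and delivers $\alpha_k$ in one line, bypassing that binomial-sum identity entirely. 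The derivation of~\eqref{s_k} via the negative-binomial expansion of $(z-a)^{-k}$ is essentially identical in both proofs.
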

\begin{proof}
Expanding the polynomial $q(z)$ into the Taylor series at the point
$a$, we obtain
\begin{equation*}\label{Theorem.rat.func.proof.1}
R(z)=\dfrac{q(a)}{(z-a)^{n}}+\dfrac{q'(a)}{1!(z-a)^{n-1}}+\dfrac{q''(a)}{2!(z-a)^{n-2}}+\cdots+\dfrac{q^{(n-1)}(a)}{(n-1)!(z-a)}\,,
\end{equation*}
so
\begin{equation*}\label{alpha_k.2}
\alpha_k=\dfrac{q^{(n-k)}(a)}{(n-k)!}\, \, , \quad \mbox{for}\; k=1,\ldots,n\, .
\end{equation*}
From~\eqref{second.poly}, we have
\begin{equation*}\label{Theorem.rat.func.proof.2}
\dfrac{q^{(n-k)}(z)}{(n-k)!}=-\sum_{j=1}^{\left\lfloor\tfrac{k-1}2\right\rfloor}
\binom{n}{2j+1}\binom{n-2j-1}{n-k}a^{2j+1}z^{k-2j-1}\, , \quad \mbox{for}\;
k=1,\ldots,n,
\end{equation*}
which implies
\begin{equation}\label{Theorem.rat.func.proof.3}
\alpha_{k}=\dfrac{q^{(n-k)}(a)}{(n-k)!}=-a^{k}\binom{n}{k}\sum_{j=0}^{\left\lfloor\tfrac{k-1}2\right\rfloor}\binom{k}{2j+1}\, , \quad \mbox{for}\;
k=1,\ldots,n.
\end{equation}

Now substituting
\begin{equation*}\label{Theorem.rat.func.proof.8}
2^{k-1}=\sum_{j=0}^{\left\lfloor\tfrac{k-1}2\right\rfloor}\binom{k}{2j+1}
\end{equation*}
into~\eqref{Theorem.rat.func.proof.3}, we get~\eqref{alpha_k}.

To establish~\eqref{s_k}, we recall that, for any nonnegative integer
number $\ell$,
\begin{equation*}\label{Theorem.rat.func.proof.9}
\displaystyle\dfrac1{(z-a)^{\ell}}=\sum_{m=0}^{\infty}\binom{m+\ell-1}{\ell-1}\dfrac{a^m}{z^{m+\ell}}\, ,
\end{equation*}
so
\begin{eqnarray*}
R(z) & = & \sum_{k=1}^n\dfrac{\alpha_k}{(z-a)^{k}}% \\
= %& = &
\sum_{m=0}^{\infty}\sum_{k=1}^n\dfrac{\binom{m+k-1}{k-1}a^m\alpha_k}{z^{m+k}}
\\ & = &
\sum_{m=0}^{\infty} \dfrac1{z^{m+1}}\sum_{k=1}^{n}a^{m-k+1}\binom{m}{k-1}\alpha_{k}\, .
\end{eqnarray*}
This formula together with~\eqref{alpha_k} gives us~\eqref{s_k}.
\end{proof}

\begin{remark}\label{Remark:moments}
Note that the numbers $s_m$ satisfy the following recurrence
relation
$$
(1+m)a^2s_m+2nas_{m+1}-(3+m)s_{m+2}=0\, .
$$
A careful consideration shows that the moments $s_m$ are polynomials in $n$, and can be
represented as
$$
s_m(n)=-\dfrac{a^{m+1}n}{(m+1)!}\,p_m(n)\, , \quad \mbox{for}\; m=0,1,2,\ldots,
$$
where $p_m(x)$ are the polynomials satisfying the following three-term
recurrence relations:
$$
p_{m+1}(x)=xp_m(x)+m(m+1)p_{m-1}(x)\, , \quad \mbox{for}\; m=1,2,\ldots
$$
with the initial conditions $p_0(x)=1$, $p_1(x)=x$. These recurrence relations generate the following
continued fraction (see, e.g.,~\cite[p.231]{Lorentzen}):
$$
\Phi(x)=x\int\limits_{0}^{\infty}e^{-xt}\tanh(t)\, dt=\cfrac{1}{x+\cfrac{1\cdot2}{x+\cfrac{2\cdot3}{x+\cdots}}}
$$
convergent for $x>0$. In fact, the integral itself converges in the right half-plane, so it generates a measure
of orthogonality for the polynomials~$p_m(x)$ with the support on the imaginary axis.

It is not hard to identify this system of orthogonal polynomials: on comparing the
recurrence coefficients one observes that they are the Meixner--Pollaczek
polynomials~$P_m^{(1)}\big(\:\cdot\:;\frac\pi2\big)$ up to rotation and renormalisation.
Namely,
% \[\color{blue}
%     \dfrac{a^{m+1}}{(m+1)!}\,p_m(n)
%     =
%     -\frac 1n s_m(n)
%     =
%     a^{m+1}
%     {}_2F_1\left(\begin{matrix}-m,1-n\\2\end{matrix}\,\Big|\,2\right)
%     = \frac{a^{m+1}i^{-m}}{m+1} P_m^{(1)}\left(in;\frac{\pi}2\right),
% \]
\[
    p_m(n)
    =
    -\frac {(m+1)!}{a^{m+1} n} s_m(n)
    =
    (m+1)!\:
    {}_2F_1\left(\begin{matrix}-m,1-n\\2\end{matrix}\,\Big|\,2\right)
    = \frac{m!}{i^{m}} P_m^{(1)}\left(in;\frac{\pi}2\right),
\]
so the corresponding orthogonality measure is~$\left|\Gamma(1-x)\right|^2$
with~$x\in i\mathbb{R}$,
see\footnote{Note a typo in formula~(3.22) on that page: the third parameter of~${}_2F_1$ is
    actually~$2\lambda$, not~$2$.}~\cite[p.~180]{C1978}.

%It would be interesting to study these polynomials, especially their orthogonality measure. We postpone such a study for our further investigations.
\end{remark}

\vspace{2mm}

The entries $b_m$ of the matrix $J_{n}$ defined in~\eqref{main.matrix} can be found by the formula (see, e.g.,~\cite{HT2012, T2012} and references therein)
\begin{equation}\label{coeff.b_k}
b_m=-\dfrac{D_{m-1}(R)D_{m+1}(R)}{D_m^2(R)}\, , \quad \mbox{for}\; m=1,2,\ldots,n-1\, ,
\end{equation}
where $D_0(R)=1$, and $D_{m}(R)$, for $m=1,\ldots,n$, are the Hankel determinants defined as follows
\begin{equation}\label{Hankel.determinants.1}
D_{m}(R)=
\begin{vmatrix}
    s_0 &s_1 &s_2 &\cdots &s_{m-1}\\
    s_1 &s_2 &s_3 &\cdots &s_{m}\\
    \vdots&\vdots&\vdots&\cdots&\vdots\\
    s_{m-1} &s_{m} &s_{m+1} &\cdots &s_{2m-2}
\end{vmatrix}\, , \quad \mbox{for}\; m=1,2,\dots,n.
\end{equation}
Recall that
\begin{equation*}
D_{m}(R)=0\, ,\qquad\mbox {for $m=n+1,n+2,\ldots,$}
\end{equation*}
since the rational function $R(z)$ has exactly $n$ poles, counting
multiplicities (see, e.g.,~\cite[Ch. XV, \S10, Theorem 8]{Gantmakher.1} or, alternatively,~\cite[Theorem 1.2]{HT2012}).

Moreover,
\begin{equation}\label{coeff.0}
b_0=-an,
\end{equation}
since $-b_0$ equals the trace of $J_{n}$.

To find the exact formula for the minors $D_{m}(R)$ we need the
following general result that can be obtained using elementary
determinant properties.

\begin{lemma}\label{Lemma.Hankel.minors.equivalency}
The minors $D_{m}(R)$ can be represented in the following form:
\begin{equation}\label{Hankel.determinants.2}
D_{m}(R)=
\begin{vmatrix}
    \alpha_1 &\alpha_2 & \dots &\alpha_{m}\\
    \alpha_2 &\alpha_3 &\dots &\alpha_{m+1}\\
    \vdots&\vdots&\ddots&\vdots\\
    \alpha_{m} &\alpha_{m+1} &\dots &\alpha_{2m-1}
\end{vmatrix}\, ,
\end{equation}
for $m=1,2,\dots,n$, where we set $\alpha_j=0$, for $j>n$.
\end{lemma}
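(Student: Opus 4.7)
The plan is to exhibit an explicit factorisation $S = U A U^{T}$, where $S=(s_{i+j-2})_{i,j=1}^{m}$ is the Hankel matrix in~\eqref{Hankel.determinants.1}, $A=(\alpha_{i+j-1})_{i,j=1}^{m}$ is the matrix in~\eqref{Hankel.determinants.2}, and $U$ is a lower-triangular $m\times m$ matrix with unit diagonal. Since $\det U = 1$, this immediately yields $\det S = \det A$.

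The starting point is the explicit formula from Theorem~\ref{Theorem.rat.func}, which for $i,j\in\{1,\dots,m\}$ reads
\[
s_{i+j-2} \;=\; \sum_{k=1}^{n}\binom{i+j-2}{k-1}\,a^{i+j-k-1}\,\alpha_{k}.
\]
Extending the summation to all $k\geq 1$ is harmless, since $\alpha_{k}=0$ for $k>n$ by convention. Now I apply the Vandermonde convolution
\[
\binom{i+j-2}{k-1} \;=\; \sum_{r+s=k-1}\binom{i-1}{r}\binom{j-1}{s}
\]
and split the exponent as $a^{i+j-k-1} = a^{(i-1)-r}\cdot a^{(j-1)-s}$. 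Reindexing by $(r,s)$ in place of $k$ yields
\[
s_{i+j-2} \;=\; \sum_{r=0}^{i-1}\sum_{s=0}^{j-1}\binom{i-1}{r}a^{i-1-r}\cdot \alpha_{r+s+1}\cdot \binom{j-1}{s}a^{j-1-s}.
\]

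Setting $U_{il}:=\binom{i-1}{l-1}\,a^{i-l}$ for $i,l\in\{1,\dots,m\}$ and using $\alpha_{r+s+1}=A_{r+1,\,s+1}$, the right-hand side is precisely $(U A U^{T})_{ij}$. The matrix $U$ is lower-triangular, since $\binom{i-1}{l-1}=0$ whenever $l>i$, with $U_{ii}=1$; hence $\det U = 1$ and $\det S = \det A$.

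No substantial obstacle arises; the only point requiring attention is recognising that the Vandermonde convolution furnishes exactly the tensorial split of the summation index~$k$ into independent $i$- and $j$-parts needed to conjugate the Hankel matrix by a symmetric Pascal-type triangular factor. Equivalently, the same reduction can be viewed as $m-1$ successive row operations followed by the corresponding column operations (each replacing a row or column by an $a$-weighted combination of itself and its predecessors), each of which preserves the determinant; this is presumably the sense of the phrase ``elementary determinant properties'' in the lemma's statement.
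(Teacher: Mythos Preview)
Your proof is correct and is essentially the same argument as the paper's, just run in the opposite direction: the paper applies elementary column and then row operations with coefficients $(-a)^{k-j}\binom{k-1}{j-1}$ to transform $S$ into $A$ (i.e., it verifies $W^{T}SW=A$ for the signed Pascal matrix $W$), invoking the identity $\sum_{k}(-1)^{m-k}\binom{m}{k}\binom{k+\ell}{i}=\binom{\ell}{i-m}$ along the way, whereas you verify the equivalent forward factorisation $S=UAU^{T}$ with $U=(W^{T})^{-1}$ directly via the Vandermonde convolution. Your route is a little slicker, since Vandermonde replaces the less familiar alternating-sum identity the paper cites, but conceptually the two proofs coincide---as you yourself note in your final paragraph.
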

\begin{proof}
Indeed, the $k$th column $\mathbf{C}_k$ of the minor $D_{m}(R)$ has
the form
\begin{equation}\label{Lemma.Hankel.minors.proof.1}
\mathbf{C}_k=
\begin{bmatrix}
    s_{k-1}\\
    \vdots\\
    s_{k+m-2}
\end{bmatrix}\, , \quad \mbox{for}\; k=1,\dots,m\, .
\end{equation}

If we change columns as follows
\begin{equation}\label{Lemma.Hankel.minors.proof.2}
\widetilde{\mathbf{C}}_k=\sum_{j=1}^{k}(-a)^{k-j}\binom{k-1}{j-1}\mathbf{C}_{j}\, ,
\end{equation}
for $k=1,\dots,m$, where $\widetilde{\mathbf{C}}_k$ is the $k$th new column, then the minor $D_{m}(R)$ does not change its value.
From~\eqref{Lemma.Hankel.minors.proof.1}--\eqref{Lemma.Hankel.minors.proof.2},
we obtain that the $\ell$-entry $(\widetilde{\mathbf{C}}_k)_\ell$ of
the new $k$th column has the form
%
%\begin{eqnarray*}
%(\widetilde{\mathbf{C}}_k)_\ell & = &
%\sum_{j=1}^{k}(-a)^{k-j}\binom{k-1}{j-1}s_{j+\ell-2}=\sum_{j=1}^{k}(-a)^{k-j}\binom{k-1}{j-1}\sum\limits_{i=1}^{n}a^{j+\ell-1-i}\binom{j+\ell-2}{i-1}\alpha_{i}
%\nonumber
%\\ [3pt]
%& = &
%\sum\limits_{i=1}^{n}a^{k+\ell-1-i}\alpha_{i}\sum_{j=0}^{k-1}(-1)^{k-1-j}\binom{k-1}{j}\binom{j+\ell-1}{i-1}
%\nonumber
%\\ [3pt]
%& = &
%\sum_{i=1}^{n}a^{k+\ell-1-i}\binom{\ell-1}{i-k}\alpha_{i}\,=\sum_{i=k}^{n}a^{k+\ell-1-i}\binom{\ell-1}{i-k}\alpha_{i}
%\nonumber
%\\ [3pt]
%& = &
%\sum_{i=0}^{n-k}a^{\ell-1-i}\binom{\ell-1}{i}\alpha_{k+i}\,=\sum_{i=1}^{\ell}a^{\ell-i}\binom{\ell-1}{i-1}\alpha_{k+i-1}
%\end{eqnarray*}
%
\begin{eqnarray*}
(\widetilde{\mathbf{C}}_k)_\ell & = &
\sum_{j=1}^{k}(-a)^{k-j}\binom{k-1}{j-1}s_{j+\ell-2} \\ & = & \sum_{j=1}^{k}(-a)^{k-j}\binom{k-1}{j-1}\sum\limits_{i=1}^{n}a^{j+\ell-1-i}\binom{j+\ell-2}{i-1}\alpha_{i}
\\ [3pt]
& = &
\sum\limits_{i=1}^{n}a^{k+\ell-1-i}\alpha_{i}\sum_{j=0}^{k-1}(-1)^{k-1-j}\binom{k-1}{j}\binom{j+\ell-1}{i-1}
\\ [3pt]
& = &
\sum_{i=1}^{n}a^{k+\ell-1-i}\binom{\ell-1}{i-k}\alpha_{i} %\\ & = &
=\sum_{i=1}^{\ell}a^{\ell-i}\binom{\ell-1}{i-1}\alpha_{k+i-1}\, ,
\end{eqnarray*}
where $k,\ell=1,\dots,m$ and $\alpha_j=0$, for $j>n$. Here we used the identity
\begin{equation*}\label{identity.1}
\sum_{k=0}^m(-1)^{m-k}\binom{m}{k}\binom{k+\ell}{i}=\binom{\ell}{i-m}
\end{equation*}
which is true for any nonnegative integer numbers $m,\ell,i$ (cf., e.g.,~\cite[Section 4.2.5, no. 55]{PBM2002} or~\cite[Chapter 5, Table 169]{GKP1998}).

Thus, we get the following formula for the minors $D_{m}(R)$:
\begin{equation}\label{Lemma.Hankel.minors.proof.4}
D_{m}(R)=
\begin{vmatrix}
    \alpha_1 &\alpha_2  &\cdots &\alpha_m\\
    a\alpha_1+\alpha_2 & a\alpha_2+\alpha_3 &\cdots &a\alpha_m+\alpha_{m+1}\\
    \vdots&\vdots&\ddots&\vdots\\
    \displaystyle\sum_{r=1}^{m}a^{m-r}\binom{m}{r}\alpha_{r} &\displaystyle\sum_{r=1}^{m}a^{m-r}\binom{m}{r}\alpha_{r+1}
    &\cdots &\displaystyle\sum_{r=1}^{m}a^{m-r}\binom{m}{r}\alpha_{r+m-1}
\end{vmatrix} \, ,
\end{equation}
for $m=1,2,\dots,n$.

Let us consider now the $\ell$th row of the matrix~\eqref{Lemma.Hankel.minors.proof.4}. It has the form
\begin{equation}\label{Lemma.Hankel.minors.proof.5}
\mathbf{R}_\ell=
\begin{bmatrix}
    v_{\ell,1}, v_{\ell,2},\ldots, v_{\ell,m}
\end{bmatrix}\, \, , \quad \mbox{for}\; \ell=1,\dots,m\, ,
\end{equation}
where
\begin{equation}\label{Lemma.Hankel.minors.proof.6}
v_{\ell k}=\sum_{i=1}^{l}a^{\ell -i}\binom{\ell-1}{i-1}\alpha_{i+k-1}\, , \quad \mbox{for}\;
\ell,k=1,2,\ldots,m\, .
\end{equation}

Now we change the rows as follows
\begin{equation}\label{Lemma.Hankel.minors.proof.7}
\widetilde{\mathbf{R}}_\ell=\sum_{j=1}^{\ell}(-a)^{\ell-j}\binom{\ell-1}{j-1}\mathbf{R}_{j}\, , \quad \mbox{for}\;
\ell=1,\dots,m\, ,
\end{equation}
where $\widetilde{\mathbf{R}}_\ell$ is the $\ell$th new row. This
operation does not change the value of the minor $D_{m}(R)$.
From~\eqref{Lemma.Hankel.minors.proof.5}--\eqref{Lemma.Hankel.minors.proof.7},
we conclude that the $k$th entry $(\widetilde{\mathbf{R}}_\ell)_k$
of the new $\ell$th row has the form
\begin{eqnarray}\label{Lemma.Hankel.minors.proof.8}
(\widetilde{\mathbf{R}}_\ell)_k & = & \sum_{j=1}^{\ell}(-a)^{\ell-j}\binom{\ell-1}{j-1}v_{j,k}
\nonumber
\\ [3pt]
& = &
\sum\limits_{j=1}^{\ell}\sum\limits_{i=1}^{j}a^{\ell-i}(-1)^{\ell-j}
\binom{\ell-1}{j-1}\binom{j-1}{i-1}\alpha_{i+k-1}
\nonumber
\\ [2pt]
& = &
\sum\limits_{i=1}^{\ell}a^{\ell-i}\alpha_{i+k-1} \sum\limits_{j=i}^{\ell}(-1)^{\ell-j}\binom{\ell-1}{j-1}\binom{j-1}{i-1}
\nonumber
\\ [2pt]
& = &
\sum\limits_{i=1}^{\ell}a^{\ell-i}\alpha_{i+k-1}\delta_{\ell i}=\alpha_{k+\ell-1},
\end{eqnarray}
for $k,\ell=1,\dots,m$.
Now~\eqref{Lemma.Hankel.minors.proof.8} implies~\eqref{Hankel.determinants.2}.
\end{proof}

Now we are in a position to calculate the values of the Hankel minors $D_m(R)$.

\begin{theorem}\label{Theorem.Hankel.minors.exact}
The minors $D_m(R)$ have the form
\begin{equation}\label{Hankel.determinants.3}
D_{m}(R)=(-1)^{\tfrac{m(m+1)}2}a^{m^2}2^{m(m-1)}\binom{n}{m}\prod_{k=0}^{m-1}
\dfrac{\binom{n+k}{n-k}}{\binom{m+k}{m-k}}\, ,
\end{equation}
for $m=1,2,\dots,n$.
\end{theorem}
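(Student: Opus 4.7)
The plan is to combine Lemma~\ref{Lemma.Hankel.minors.equivalency} with the explicit form of $\alpha_k$ from~\eqref{alpha_k} to strip the determinant of its dependence on $a$ and powers of $2$, and then to evaluate the remaining Hankel determinant of pure binomial coefficients in closed form.

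By Lemma~\ref{Lemma.Hankel.minors.equivalency} and~\eqref{alpha_k}, the $(i,j)$-entry of the Hankel matrix is $-2^{i+j-2}a^{i+j-1}\binom{n}{i+j-1}$, with the convention $\binom{n}{k}=0$ for $k>n$. Pulling $-2^{i-1}a^{i}$ out of row~$i$ and $2^{j-1}a^{j-1}$ out of column~$j$ gives, after a routine count of extracted factors,
\begin{equation*}
D_m(R)=(-1)^m\,2^{m(m-1)}\,a^{m^2}\,H_m,\qquad H_m:=\det\bigl[\tbinom{n}{i+j-1}\bigr]_{i,j=1}^m,
\end{equation*}
so that~\eqref{Hankel.determinants.3} reduces to the purely combinatorial identity
\begin{equation*}
H_m=(-1)^{m(m-1)/2}\binom{n}{m}\prod_{k=0}^{m-1}\frac{\binom{n+k}{n-k}}{\binom{m+k}{m-k}}.
\end{equation*}

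The conceptually cleanest way to evaluate $H_m$ is to reverse the order of its rows. This introduces the sign $(-1)^{m(m-1)/2}$ and turns $H_m$ into the Toeplitz determinant $\det[\binom{n}{m-i+j}]_{i,j=1}^m=\det[e_{m-i+j}(1^n)]$, where $e_k$ denotes the elementary symmetric polynomial in $n$ variables evaluated at $(1,\ldots,1)$. By the dual Jacobi--Trudi identity this determinant equals the Schur polynomial $s_{(m^m)}(1^n)$ attached to the $m\times m$ square partition, whose value is supplied by the hook--content formula as $\prod_{(i,j)\in(m^m)}(n+j-i)/(2m-i-j+1)$. Regrouping this double product factor-by-factor rewrites it precisely as $\binom{n}{m}\prod_{k=0}^{m-1}\binom{n+k}{n-k}/\binom{m+k}{m-k}$.

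A more elementary alternative, which stays closer to the paper's binomial toolkit, is induction on~$m$ using the Desnanot--Jacobi identity for Hankel minors, applied in parallel to $H_m$ and to the shifted Hankel determinant $\det[\alpha_{i+j}]_{i,j=1}^{m-1}$; the base cases $D_1=-an$ and $D_2$ are immediate and consistent with~\eqref{coeff.0}. The main obstacle in either route is the bookkeeping: after the evaluation one is left with a product of factorials of the shape $(n\pm k)!$ and $(m\mp k)!$ that must be carefully rearranged into the exact quotient of binomial coefficients stated in~\eqref{Hankel.determinants.3}.
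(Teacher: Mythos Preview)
Your reduction to $D_m(R)=(-1)^m 2^{m(m-1)} a^{m^2} H_m$ with $H_m=\det\bigl[\binom{n}{i+j-1}\bigr]$ matches the paper exactly; what diverges is the evaluation of~$H_m$. The paper proceeds by elementary manipulations: it first extracts the common row factors $n(n-1)\cdots(n-i+1)$, then repeatedly adds adjacent columns and uses the Pascal-type identity~\eqref{Theorem.Hankel.minors.exact.proof.4} to peel off factors $(n+1)^{m-1},(n+2)^{m-2},\dots$, finally reducing to the determinant $\det\bigl[1/(i+j-1)!\bigr]$, which it evaluates by specialising~$n=m$. Your route through the row reversal, the dual Jacobi--Trudi identity $s_{(m^m)}=\det[e_{m-i+j}]$, and the hook--content formula $s_{(m^m)}(1^n)=\prod_{(i,j)}(n+j-i)/(2m-i-j+1)$ is correct and considerably shorter; it also explains \emph{why} such a clean product appears, at the price of importing machinery from symmetric-function theory that the paper avoids. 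Your alternative Desnanot--Jacobi suggestion is plausible but, as you note, would require tracking a second Hankel family and is not obviously simpler than either of the other two routes. The one place to be careful is the final regrouping of $\prod_{i,j}(n+j-i)/(2m-i-j+1)$ into $\binom{n}{m}\prod_k\binom{n+k}{n-k}/\binom{m+k}{m-k}$: this is routine but not entirely trivial, and in a full write-up you should display at least the intermediate step $\prod_{d=-(m-1)}^{m-1}\bigl((n+d)/(m+d)\bigr)^{m-|d|}$.
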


\begin{proof}
From~\eqref{alpha_k} and~\eqref{Hankel.determinants.2} we have
\begin{eqnarray}
D_{m}(R) & = & \begin{vmatrix}
    -a\binom{n}{1} &-2a^2\binom{n}{2} &\dots &-2^{m-1}a^{m}\binom{n}{m}  \\
    -2a^2\binom{n}{2} &-2^2a^3\binom{n}{3} &\dots &-2^{m}a^{m+1}\binom{n}{m+1}\\
    \vdots&\vdots & \ddots&\vdots\\
    -2^{m-1}a^{m}\binom{n}{m} &-2^{m}a^{m+1}\binom{n}{m+1} &\dots &-2^{2m-2}a^{2m-1}\binom{n}{2m-1}
\end{vmatrix} \nonumber \\ [0.2cm]
& = &  (-1)^{m}2^{\tfrac{m(m-1)}2}a^{\tfrac{m(m+1)}2}
\begin{vmatrix}
    \binom{n}{1} &2a\binom{n}{2} &\dots &2^{m-1}a^{m-1}\binom{n}{m}\\
    \binom{n}{2} &2a\binom{n}{3} &\dots &2^{m-1}a^{m-1}\binom{n}{m+1}\\
    \vdots&\vdots&\ddots&\vdots\\
    \binom{n}{m} &2a\binom{n}{m+1} &\dots &2^{m-1}a^{m-1}\binom{n}{2m-1}
\end{vmatrix} \nonumber \\ [0.2cm]
& = &
(-1)^{m}2^{\tfrac{m(m-1)}2}a^{\tfrac{m(m+1)}2}(2a)^{\tfrac{m(m-1)}2}
\begin{vmatrix}
    \binom{n}{1} &\binom{n}{2}  &\dots &\binom{n}{m}\\
    \binom{n}{2} &\binom{n}{3} &\dots &\binom{n}{m+1}\\
    \vdots&\vdots&\ddots&\vdots\\
    \binom{n}{m} &\binom{n}{m+1} &\dots &\binom{n}{2m-1}
\end{vmatrix} \nonumber
\\ [0.2cm]
& = & (-1)^{m}2^{m(m-1)}a^{m^2}T_{m}\, ,
\label{Theorem.Hankel.minors.exact.proof.1}
\end{eqnarray}
where
\begin{eqnarray}
T_{m} & = & \begin{vmatrix}
    \binom{n}{1} &\binom{n}{2} &\dots &\binom{n}{m}\\
    \binom{n}{2} &\binom{n}{3} &\dots &\binom{n}{m+1}\\
    \vdots&\vdots&\ddots&\vdots\\
    \binom{n}{m} &\binom{n}{m+1} &\dots &\binom{n}{2m-1}
\end{vmatrix} \nonumber
\\ [0.2cm] & = &
\begin{vmatrix}
    \tfrac{n}{1!} &\tfrac{n(n-1)}{2!} &\dots &\tfrac{n(n-1)\cdots(n-m+1)}{m!}\\
    \tfrac{n(n-1)}{2!} &\tfrac{n(n-1)(n-2)}{3!} &\dots &\tfrac{n(n-1)\cdots(n-m)}{(m+1)!}\\
    \vdots&\vdots&\ddots&\vdots\\
    \tfrac{n(n-1)\cdots(n-m+1)}{m!} &\tfrac{n(n-1)\cdots(n-m)}{(m+1)!} &\dots &\tfrac{n(n-1)\cdots(n-2m+2)}{(2m-1)!}
\end{vmatrix} \label{Theorem.Hankel.minors.exact.proof.2}
\end{eqnarray}
for $m=1,2,3,\dots,n$. Taking into account that the $j$th, $(j+1)$th,..., $m$th rows have the
common factor $n(n-1)\cdots(n-j+1)$, for $j=1,\ldots,m$, we get
\begin{eqnarray*}
T_{m} & = & n^{m}(n-1)^{m-1}\cdots(n-m)^2(n-m+1)
\begin{vmatrix}
    \tfrac{1}{1!} &\tfrac{n-1}{2!} &\dots &\tfrac{(n-1)\cdots(n-m+1)}{m!}\\
    \tfrac{1}{2!} &\tfrac{n-2}{3!} & \dots &\tfrac{(n-2)\cdots(n-m)}{(m+1)!}\\
    \vdots&\vdots&\ddots&\vdots\\
    \tfrac{1}{m!} &\tfrac{n-m}{(m+1)!} &\dots &\tfrac{(n-m)\cdots(n-2m+2)}{(2m-1)!}
\end{vmatrix} \nonumber \\[0.2cm]
& = & \prod_{k=1}^{m}k!\binom{n}{k}\,
\begin{vmatrix}
    \tfrac{1}{1!} &\tfrac{n-1}{2!} &\dots &\tfrac{(n-1)\cdots(n-m+1)}{m!}\\
    \tfrac{1}{2!} &\tfrac{n-2}{3!} & \dots &\tfrac{(n-2)\cdots(n-m)}{(m+1)!}\\
    \vdots&\vdots&\ddots&\vdots\\
    \tfrac{1}{m!} &\tfrac{n-m}{(m+1)!} &\dots &\tfrac{(n-m)\cdots(n-2m+2)}{(2m-1)!}
    \label{Theorem.Hankel.minors.exact.proof.3}
\end{vmatrix}
\end{eqnarray*}
Now we add the $(m-1)$th column to the $m$th column, then the
$(m-2)$th column to the $(m-1)$th column, etc. The identity
\begin{equation}\label{Theorem.Hankel.minors.exact.proof.4}
\begin{array}{l}
\dfrac{(n-i)(n-i-1)\cdots(n-i-j+3)}{(i+j-2)!}+\dfrac{(n-i)(n-i-1)\cdots(n-i-j+2)}{(i+j-1)!}=\\
\\
=(n+1)\dfrac{(n-i)(n-i-1)\cdots(n-i-j+3)}{(i+j-1)!}
\end{array}
\end{equation}
is used repeatedly for each element of the $j$th column, for $j=m,m-1,\ldots,2$. Thus, we conclude
that the determinant does not change its value, while its $2$nd, $3$rd, ..., $m$th columns have the
common factor $n+1$. Therefore, we have
\begin{equation*}\label{Theorem.Hankel.minors.exact.proof.5}
T_{m}=(n+1)^{m-1}\prod_{k=1}^{m}k!\binom{n}{k}\,
\begin{vmatrix}
\tfrac{1}{1!}      &\tfrac{1}{2!}    &\dots &\tfrac{(n-1)\cdots(n-m+2)}{m!}\\
\tfrac{1}{2!} &\tfrac{1}{3!} &\dots &\tfrac{(n-2)\cdots(n-m+1)}{(m+1)!}\\
\vdots&\vdots&\ddots&\vdots\\
\tfrac{1}{m!} &\tfrac{1}{(m+1)!} &\dots &\tfrac{(n-m)\cdots(n-2m+3)}{(2m-1)!}
\end{vmatrix}
\end{equation*}

Now we use the similar procedure for the $3$rd, $4$th,$\ldots$,
$(m+1)$th columns. Namely, we add the $(m-1)$th column to the $m$th,
then the $(m-2)$th column to the $(m-1)$th, etc., and use the identity
\begin{equation*}\label{Theorem.Hankel.minors.exact.proof.6}
\begin{array}{l}
\dfrac{(n-i)(n-i-1)\cdots(n-i-j+4)}{(i+j-2)!}+\dfrac{(n-i)(n-i-1)\cdots(n-i-j+3)}{(i+j-1)!}=\\
\\
=(n+2)\dfrac{(n-i)(n-i-1)\cdots(n-i-j+4)}{(i+j-1)!}
\end{array}
\end{equation*}
repeatedly for each element of the $j$th column, for $j=3,\ldots,m$.
Again we conclude that the determinant does not change its value,
while its $3$rd, $4$th,$\ldots$, $m$th columns have the common
factor $n+2$. So we have
\begin{equation*}\label{Theorem.Hankel.minors.exact.proof.7}
T_{m}\displaystyle=(n+1)^{m-1}(n+2)^{m-2}\prod_{k=1}^{m}k!\binom{n}{k}\,
\begin{vmatrix}
    \tfrac{1}{1!}      &\tfrac{1}{2!}  &\cdots &\tfrac{(n-1)\cdots(n-m+3)}{m!}\\
    \tfrac{1}{2!} &\tfrac{1}{3!} &\cdots &\tfrac{(n-2)\cdots(n-m+2)}{(m+1)!}\\
    \vdots&\vdots &\ddots&\vdots\\
    \tfrac{1}{m!} &\tfrac{1}{(m+1)!} &\dots &\tfrac{(n-m)\cdots(n-2m+4)}{(2m-1)!}
\end{vmatrix}
\end{equation*}

Consequently, from a straightforward inductive argument, we get
\begin{eqnarray*}
T_{m} & = &\prod_{\ell
=1}^{m-1}(n+\ell)^{m-\ell}\prod_{k=1}^{m}k!\binom{n}{k}\,
\begin{vmatrix}
    \tfrac{1}{1!}  &\tfrac{1}{2!} &\cdots &\tfrac{1}{m!}\\
    \tfrac{1}{2!} &\tfrac{1}{3!}  &\cdots &\tfrac{1}{(m+1)!}\\
    \vdots&\vdots&\ddots&\vdots\\
    \tfrac{1}{m!} &\tfrac{1}{(m+1)!} &\cdots &\tfrac{1}{(2m-1)!}
\end{vmatrix} \nonumber
\\ [3pt]
&
=&\prod_{k=1}^{m}k!\binom{n}{k}\prod_{j=1}^{m-1}j!\binom{n+j}{j}
\, \begin{vmatrix}
    \tfrac{1}{1!}  &\tfrac{1}{2!} &\cdots &\tfrac{1}{m!}\\
    \tfrac{1}{2!} &\tfrac{1}{3!}  &\cdots &\tfrac{1}{(m+1)!}\\
    \vdots&\vdots&\ddots&\vdots\\
    \tfrac{1}{m!} &\tfrac{1}{(m+1)!} &\cdots &\tfrac{1}{(2m-1)!}
\end{vmatrix} \nonumber
\\  [3pt]
& = & m!\binom{n}{m}\prod_{k=1}^{m-1}(2k)!\binom{n+k}{n-k}
\begin{vmatrix}
    \tfrac{1}{1!}  &\tfrac{1}{2!} &\cdots &\tfrac{1}{m!}\\
    \tfrac{1}{2!} &\tfrac{1}{3!}  &\cdots &\tfrac{1}{(m+1)!}\\
    \vdots&\vdots&\ddots&\vdots\\
    \tfrac{1}{m!} &\tfrac{1}{(m+1)!} &\cdots &\tfrac{1}{(2m-1)!}
\end{vmatrix}\, . \label{Theorem.Hankel.minors.exact.proof.8}
\end{eqnarray*}

Note that the determinant $T_{m}$ equals $(-1)^{\tfrac{m(m-1)}2}$,
if $n=m$, as it follows from~\eqref{Theorem.Hankel.minors.exact.proof.2}. Therefore,
\begin{equation}\label{Theorem.Hankel.minors.exact.proof.9}
\begin{vmatrix}
    \tfrac{1}{1!}  &\tfrac{1}{2!} &\cdots &\tfrac{1}{m!}\\
    \tfrac{1}{2!} &\tfrac{1}{3!}  &\cdots &\tfrac{1}{(m+1)!}\\
    \vdots&\vdots&\ddots&\vdots\\
    \tfrac{1}{m!} &\tfrac{1}{(m+1)!} &\cdots &\tfrac{1}{(2m-1)!}
\end{vmatrix}=\displaystyle\dfrac{(-1)^{\tfrac{m(m-1)}2}}{m!\displaystyle\prod_{k=1}^{m-1}(2k)!\binom{m+k}{m-k}},
\end{equation}
and, consequently,
\begin{eqnarray*}
\displaystyle T_{m} & = &
(-1)^{\tfrac{m(m-1)}2}\dfrac{m!\displaystyle\binom{n}{m}\prod_{k=1}^{m-1}(2k)!\binom{n+k}{n-k}}{m!
\displaystyle\prod_{k=1}^{m-1}(2k)!\binom{m+k}{m-k}}  \nonumber \\
[0.2cm] & = &
(-1)^{\tfrac{m(m-1)}2}\binom{n}{m}\prod_{k=1}^{m-1}\dfrac{\binom{n+k}{n-k}}{\binom{m+k}{m-k}}
\, .  \label{Theorem.Hankel.minors.exact.proof.10}
\end{eqnarray*}
Substituting this expression for $T_m$ into~\eqref{Theorem.Hankel.minors.exact.proof.1}, we obtain~\eqref{Hankel.determinants.3},
%
%\begin{equation*}\label{Theorem.Hankel.minors.exact.proof.11}
%\displaystyle
%D_{m+1}(R)=(-1)^{\tfrac{(m+1)(m+2)}2}a^{(m+1)^2}2^{m(m+1)}\binom{n+1}{m+1}\prod_{k=0}^{m}\dfrac{\binom{n+1+k}{n+1-k}}{\binom{m+1+k}{m+1-k}},
%\end{equation*}
%
as required.
\end{proof}
\begin{remark}
The determinant~\eqref{Theorem.Hankel.minors.exact.proof.9} is
connected to the Pad\'e approximations to the exponential, and its
value can be found, e.g., in~\cite[Section 1.2, p.9]{BG-M1980}.
\end{remark}

\begin{remark}\label{Remark:Pascal}
    On summing up the matrix transformations behind Theorem~\ref{Theorem.Hankel.minors.exact},
    one additionally arrives at the following factorisation of the matrix
    from~\eqref{Theorem.Hankel.minors.exact.proof.2}:
    \[
        \left[\binom{n}{i+j-1}\right]_{i,j=1}^m
        = \diag\left[(n-j)_{j+1}\right]_{j=0}^{m-1}
        \cdot \left[\frac{1}{(i+j-1)!}\right]_{i,j=1}^m
        \cdot\diag\left[(n)_{j}\right]_{j=0}^{m-1}
        \cdot \left[(-1)^{i+j}\binom{j}{i}\right]_{i,j=0}^{m-1}
        \, ,
    \]
    where
    \[
        \left[(-1)^{i+j}\binom{j}{i}\right]_{i,j=0}^{m-1}
        =\left(\left[\binom{j}{i}\right]_{i,j=0}^{m-1}\right)^{-1}
    \]
    is the inverse of the upper-triangular Pascal matrix. This inverse of the Pascal matrix is
    a product of $m-1$ bidiagonal terms stemming from column operations and application of the
    Pascal identity starting~\eqref{Theorem.Hankel.minors.exact.proof.4}. The Pascal matrix
    itself also has an analogous bidiagonal factorisation (but the terms are unsigned). The
    remaining term, whose determinant appears on the left-hand side of~\eqref{Theorem.Hankel.minors.exact.proof.9},
    can be expressed from the
    matrix~$\left[\binom{m}{i+j-1}\right]_{i,j=1}^m$ by the reverse transformations with~$n$ set
    to be~$m$, that is
    \[
        \left[\frac{1}{(i+j-1)!}\right]_{i,j=1}^m
        \!\!\!\!\!\!
        = \diag\left[\frac1{(m-j)_{j+1}}\right]_{j=0}^{m-1}
        \cdot \left[\binom{m}{i+j-1}\right]_{i,j=1}^m
        \!\!\!\!\!\!
        \cdot \diag\left[\frac1{(m)_{j}}\right]_{j=0}^{m-1}
        \cdot \left[\binom{j}{i}\right]_{i,j=0}^{m-1}
        \!\!\!.
    \]
    At the same time, the Hankel matrix~$\left[\binom{m}{i+j-1}\right]_{i,j=1}^m$ is built from
    the coefficients of the polynomial~$(x+1)^m$, and hence it may be further factorised
    into~$m\times m$ factors
    \[
        \left[\binom{m}{i+j-1}\right]_{i,j=1}^m =
        \begin{bmatrix}
            0&\hdots &0&0& 1\\[-3pt]
            \vdots&\iddots &0&1& 0\\[-3pt]
            \vdots&\iddots &1&0& 0\\[-3pt]
            0&\iddots &\iddots&\iddots& \vdots\\
            1&0&0&\hdots &0\\
        \end{bmatrix}
        \cdot
        \begin{bmatrix}
            1&0&0&\hdots &0\\[-3pt]
            1&1&0&\ddots &\vdots\\[-3pt]
            0&1&1&\ddots &0\\[-3pt]
            \vdots&\ddots&\ddots&\ddots& 0\\
            0&\hdots&0 &1&1\\
        \end{bmatrix}^m
        \!\!\!.
    \]
\end{remark}\medskip

Now~\eqref{coeff.b_k} and~\eqref{Hankel.determinants.3} give us the
exact formula for the entries of the matrix~\eqref{main.matrix}
whose characteristic polynomial is~\eqref{main.poly}.
\begin{corollary}\label{Corol.b_k}
    The coefficients $b_m$ of the relations~\eqref{three-term.rec.rel.general} have the form
\begin{equation}\label{coeff.k.2}
b_m=a^2\,\dfrac{n^2-m^2}{4m^2-1}\, , \quad \mbox{for}\; m=1,2,\ldots,n-1.
\end{equation}
\end{corollary}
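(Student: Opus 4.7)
The plan is to substitute the closed-form expression from Theorem~\ref{Theorem.Hankel.minors.exact} into the formula $b_m = -D_{m-1}(R)D_{m+1}(R)/D_m(R)^2$ and simplify. Writing
\[
    D_m(R) = (-1)^{m(m+1)/2}\,a^{m^2}\,2^{m(m-1)}\binom{n}{m}\,\frac{A_m}{B_m},
    \quad
    A_m:=\prod_{k=0}^{m-1}\binom{n+k}{n-k},
    \quad
    B_m:=\prod_{k=0}^{m-1}\binom{m+k}{m-k},
\]
the ratio $-D_{m-1}D_{m+1}/D_m^2$ splits into five independent pieces: the sign, the power of $a$, the power of $2$, the $\binom{n}{\cdot}$ part, and the $A$/$B$ products. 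I would handle each in turn.

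Four of the five are routine. The signs collect to $(-1)^{(m-1)m/2+(m+1)(m+2)/2-m(m+1)}=-1$, which combined with the leading minus in~\eqref{coeff.b_k} gives $+1$; the exponent of $a$ is $(m-1)^2+(m+1)^2-2m^2=2$; the exponent of $2$ is $(m-1)(m-2)+m(m+1)-2m(m-1)=2$. The $\binom{n}{\cdot}$-piece simplifies by direct factorial cancellation to $\binom{n}{m-1}\binom{n}{m+1}/\binom{n}{m}^2 = m(n-m)/((m+1)(n-m+1))$, and the $A$-product telescopes level-by-level to
\[
    \frac{A_{m-1}A_{m+1}}{A_m^2}
    = \frac{\binom{n+m}{n-m}}{\binom{n+m-1}{n-m+1}}
    = \frac{(n+m)(n-m+1)}{2m(2m-1)}.
\]

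The main obstacle is the $B$-product $B_m^2/(B_{m-1}B_{m+1})$, because here the parameter $m$ varies in \emph{both} indices of every binomial and cancellation does not happen level-by-level. The plan is to establish the single identity $B_{m+1}/B_m = \binom{2m+1}{m}$, from which $B_m^2/(B_{m-1}B_{m+1}) = \binom{2m-1}{m-1}/\binom{2m+1}{m} = (m+1)/(2(2m+1))$ follows immediately. To prove the identity I would group the binomial factors of $B_{m+1}$ and $B_m$ by their top index. The tops shared by both products are $m+1,\dots,2m-1$; their combined contribution is $\prod_{k=0}^{m-2}\tfrac{(2k+1)(2k+2)}{(m-k)(m-k+1)} = 2(2m-2)!/(m!(m+1)!)$, using $\prod_{k=0}^{m-2}(2k+1)(2k+2)=(2m-2)!$. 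The boundary tops $2m$ and $2m+1$, which appear in $B_{m+1}$ only, contribute $\binom{2m}{2}\binom{2m+1}{1}=m(2m-1)(2m+1)$, and the boundary top $m$ in $B_m$ contributes $\binom{m}{m}=1$. Multiplying yields $(2m+1)!/(m!(m+1)!)=\binom{2m+1}{m}$.

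Combining all five factors then gives
\[
    b_m = a^2\cdot 4\cdot\frac{m(n-m)}{(m+1)(n-m+1)}\cdot\frac{(n+m)(n-m+1)}{2m(2m-1)}\cdot\frac{m+1}{2(2m+1)}
    = a^2\,\frac{n^2-m^2}{4m^2-1},
\]
which is~\eqref{coeff.k.2}. Essentially all of the combinatorial labour is concentrated in the $B$-product identification; the rest is bookkeeping.
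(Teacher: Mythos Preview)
Your proof is correct and follows the same overall strategy as the paper: substitute the closed form from Theorem~\ref{Theorem.Hankel.minors.exact} into~\eqref{coeff.b_k} and simplify. The paper organises the computation slightly differently---it first works out the successive ratio $D_{m+1}(R)/D_m(R)$ in closed form and then divides two such ratios---but the real distinction is in how the $B$-product is dispatched. You group factors by their top index and assemble $B_{m+1}/B_m=\binom{2m+1}{m}$ from three pieces; the paper instead matches factors by the running index~$k$ and uses the one-line identity
\[
\frac{\binom{m+k}{m-k}}{\binom{m+1+k}{m+1-k}}=\frac{m+1-k}{m+1+k},
\]
after which the product $\prod_{k=1}^{m-1}\frac{m+1-k}{m+1+k}$ collapses immediately. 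That route is shorter and avoids the boundary bookkeeping you flagged as the ``main obstacle'', though your argument is perfectly valid and yields the same intermediate identity in a different guise.
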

\begin{proof}
Let us consider first the relation
\begin{eqnarray}
\dfrac{D_{m+1}(R)}{D_m(R)} & = &
\dfrac{(-1)^{\tfrac{(m+1)(m+2)}2}2^{(m+1)m}a^{(m+1)^2}\binom{n}{m+1}\displaystyle\prod_{k=1}^{m}\dfrac{\binom{n+k}{n-k}}
{\binom{m+1+k}{m+1-k}}}
{(-1)^{\tfrac{m(m+1)}2}2^{m(m-1)}a^{m^2}\binom{n}{m}\displaystyle\prod_{k=1}^{m-1}\dfrac{\binom{n+k}{n-k}}{\binom{m+k}{m-k}}}
\nonumber\\  [3pt]
& = & \displaystyle(-1)^{m+1}4^ma^{2m+1}\dfrac{n-m}{m+1}
\dfrac{\binom{n+m}{n-m}}{2m+1}\prod_{k=1}^{m-1}\dfrac{\binom{m+k}{m-k}}{\binom{m+1+k}{m+1-k}}\nonumber \\ [3pt]
& = &
(-1)^{m+1}4^ma^{2m+1}\dfrac{n-m}{m+1}\dfrac{\binom{n+m}{n-m}}{2m+1}\prod_{k=1}^{m-1}\dfrac{m+1-k}{m+1+k}\nonumber
\\  [3pt]
& = &
(-1)^{m+1}4^ma^{2m+1}\dfrac{n-m}{(2m+1)!}\displaystyle\binom{n+m}{n-m}(m!)^2.
\label{Corol.b_k.proof.1}
\end{eqnarray}

Now, from~\eqref{coeff.b_k} and~\eqref{Corol.b_k.proof.1}, we have
\begin{eqnarray*}
b_m & = & -\dfrac{D_{m+1}(R)/D_m(R)}{D_m(R)/D_{m-1}(R)}
\\ [.2cm]
& = &
-\dfrac{(-1)^{m+1}4^ma^{2m+1}\dfrac{n-m}{(2m+1)!}\displaystyle\binom{n+m}{n-m}(m!)^2}
{(-1)^{m}4^{m-1}a^{2m-1}\dfrac{n-m+1}{(2m-1)!}\displaystyle\binom{n+m-1}{n-m+1}((m-1)!)^2}\\
[.3cm]
& = &  4m^2a^2\,\dfrac{n-m}{2m(2m+1)}\dfrac{(n+m)!}{(n-m)!(2m)!}\dfrac{(n-m)!(2m-2)!}{(n+m-1)!}\\
[.3cm] & = & a^2\,\dfrac{n-m}{(2m-1)(2m+1)}\dfrac{(n+m)!}{(n+m-1)!}
%\\ [.2cm] & = &
=a^2\,\dfrac{n^2-m^2}{4m^2-1}\, .
\end{eqnarray*}
\end{proof}

As a conclusion, up to similarity, the only Schwarz matrix $J_n$ whose characteristic polynomial is $(z-a)^{n}$ is
\begin{equation}\label{binomial.matrix}
J_{n}=
\begin{bmatrix}
    an & 1 &&&\\
    -a^2\dfrac{n^2-1}{3} & 0 &1 && \\
    &-a^2\dfrac{n^2-4}{15} & \ddots & \ddots & \\
    && \ddots & \ddots & 1\\
    &&& -a^2\dfrac{1}{2n-3} & 0\\
\end{bmatrix}.
\end{equation}

Clearly, the spectrum of the matrix
\begin{equation*}%\label{binomial.matrix.2}
\tilde{J}_{n}= a
\begin{bmatrix}
n &  \dfrac{n-1}{1} &&&\\
-\, \dfrac{n+1}{3} & 0 & \dfrac{n-2}{3} && \\
&-\,\dfrac{n+2}{5} & \ddots & \ddots & \\
&& \ddots & \ddots & \dfrac{1}{2n-3}\\
&&& -1 & 0\\
\end{bmatrix}
\end{equation*}
is reduced to $a$ as well.

%%%%%%%%%%%%%%%%%%%%%%%%%%%%%%%%%%%%%%%%%%%%%%%%%%%%%%%%%%%%%%%%%%%%%%%%%%%%%%%%%%%%
\section{Orthogonal polynomials}\label{Section:orthogonal.polynomials}
%%%%%%%%%%%%%%%%%%%%%%%%%%%%%%%%%%%%%%%%%%%%%%%%%%%%%%%%%%%%%%%%%%%%%%%%%%%%%%%%%%%%

In this section, we consider the sequence $\{P_k(z)\}_{k=0}^{n-1}$ of the
characteristic polynomials of the leading principal minors of the
matrix~\eqref{binomial.matrix} satisfying the three-term recurrence
relation~\eqref{main.matrix}--\eqref{three-term.rec.rel.general}:
\begin{equation*}
P_{0}(z)=1,\quad P_1(z)=z-an,
\end{equation*}
\begin{equation}\label{orthogonal.polynomials}
P_{k+1}(z)=zP_{k}(z)+\dfrac{a^2(n^2-k^2)}{(2k-1)(2k+1)}P_{k-1}(z)\, , \quad \mbox{for}\;
k=1,\ldots,n-1.
\end{equation}
We give their explicit representations and study their even and odd parts. Then we show that $\{P_k(z)\}_{k=0}^{n-1}$ are orthogonal w.r.t.\ a linear functional supported at the only point~$z=a$. We also find the inverse Fourier transform of the corresponding distributional weight and depict the zeroes of the polynomials $P_k(z)$ for various $a$ and relations between~$k$ and~$n$ as $n\to+\infty$.

\subsection{Basic properties}\label{sect:P_k.basic.properties}
For the current subsection we assume~$a=1$ keeping in mind that the general case
$a\ne0$ may be obtained by
\begin{equation}\label{P.with.different.a}
    a^{-k}P_k(az) = P_k(z)\big|_{a=1},
\end{equation}
which immediately follows from~\eqref{orthogonal.polynomials}. Observe that
the monic Jacobi polynomials
\[
  \widehat P_{k}^{(-n,n)}(z)
  = \frac{2^k}{\binom{2k}{k}} P_{k}^{(-n,n)}(z)
  \qquad\left(
  = \frac{(-2)^k}{\binom{2k}{k}} P_{k}^{(n,-n)}(-z)
  \right)
\]
satisfy for~$k\ge 1$ the three-term recurrence relation~\cite[p.~153, eq.~(2.29)]{C1978}
\[
%\begin{split}
\widehat P_{k+1}^{(-n,n)}(z)
=%{}={}&
z\widehat P_{k}^{(-n,n)}(z)
%\\ &-
-
\frac{4k(k-n)(k+n)k}{(2k-1)(2k+1)(2k)^2}
\widehat P_{k-1}^{(-n,n)}(z),
%\end{split}
%\quad k=1,2,\dots,
\]
which on reduction of similar terms turns into \eqref{orthogonal.polynomials} with~$a=1$. Similarly,
$
\widehat P_{0}^{(-n,n)}(z)\equiv 1$
and
$\widehat P_{1}^{(-n,n)}(z)=z-n$.
% \[
% \widehat P_{0}^{(-n,n)}(z)\equiv 1 \equiv P_0(z)
% \quad\text{and}\quad
% \widehat P_{1}^{(-n,n)}(z)=z-n=P_{1}(z).
% \]
Therefore, our~$P_k(z)$ are exactly~$\widehat P_{k}^{(-n,n)}(z)$, namely,
for all $k=0,\dots,n,$
\begin{equation}\label{eq:P.explicit.2F1}
P_k(z)=\widehat P_k^{(-n,n)}(z)
=(-2)^{k}\frac{(n+1)_k}{(k+1)_k}\,
{}_2F_1\left(\genfrac{.}{|}{0pt}{0}{-k,\ k+1}{n+1}\dfrac{1+z}2\right).
\end{equation}
Also,
$$
P_k(z)\big|_{a=1}=\dfrac{(-1)^k}{(k+1)_k}\left(\dfrac{1-z}{1+z}\right)^n
\dfrac{d^k}{dz^k}\left[(1-z)^{n-k}(1+z)^{n+k}\right],
$$
$$
(1-z^2)P_k''(z)+2(n-z)P_k'(z)+k(k+1)P_k(z)=0.
$$

Although the three-term recurrence degenerates to a two-term recurrence
for~$k=n$, the sequence~$\{P_k\}_{k=0}^{n}$ may be further extended to an
infinite sequence. Properties of infinite sequences of the monic
polynomials~$\{\widehat P_k^{(-n,\beta)}\}_{k=0}^{\infty}$ with~$\beta\notin\{-1,-2,\dots\}$
were studied in~\cite{Alfaro_et_al.1999}, where orthogonality was introduced via a bilinear form.

Now, on introducing two auxiliary families of polynomials via
\begin{equation}\label{orthogonal.polynomials.aux1}
    f_0(z)=1\, ,\quad f_1(z)=z\, ,\quad\text{and}\quad
    f_{k+1}(z)=zf_{k}(z)+\dfrac{n^2-k^2}{4k^2-1}f_{k-1}(z)
\end{equation}
and
\begin{equation}\label{orthogonal.polynomials.aux2}
    g_0(z)=0\, ,\quad g_1(z)=1\, , \quad\text{and}\quad
    g_{k+1}(z)=zg_{k}(z)+\dfrac{n^2-k^2}{4k^2-1}g_{k-1}(z)
\end{equation}
with~$k=1,\dots,n-1$, one immediately obtains that the decomposition
\[
    P_k(z) = f_k(z) - n g_k(z)
\]
is valid for all~$k$. The formula~\eqref{orthogonal.polynomials.aux1} determines monic orthogonal polynomials supported on the imaginary axis, and~\eqref{orthogonal.polynomials.aux2} gives exactly the corresponding monic numerator polynomials.
In particular, for each~$k$ the zeroes of~$f_k$ and~$g_k$ are all simple and pure imaginary, they also interlace on the imaginary axis, see \cite[p.~86]{C1978}.
Therefore, the Hermite-Biehler theorem implies that all zeroes of the polynomial~$P_k(z)=f_k(z) - n g_k(z)$ have positive real parts,~\cite{KreinNaimark.1937} (see also~\cite{HT2012} and references there). That is to say, all zeroes of the polynomial~$P_k$ for arbitrary~$a\ne 0$ lie in the half-plane $\{z\in\mathbb C\, :\, \operatorname{Re}\left(z/a\right)>0\}$, which is clearly seen on the figures in Sections \ref{section:Bessel} and \ref{section:Zeroes}.

In fact, the polynomials $f_k$ and~$g_k$ may also be expressed via classical families of orthogonal polynomials.
The three-term relations~\eqref{orthogonal.polynomials.aux1}
and~\eqref{orthogonal.polynomials.aux2} are known to determine monic versions of the so-called
associated Jacobi polynomials~$\widehat P_k^{(\alpha,\beta)}(z;c)$, namely,
\[
    f_k(z)=\widehat P_k^{(-n,-n)}(z;n)\quad\text{and}\quad
    g_{k}(z)=\widehat P_{k-1}^{(-n,-n)}(z;n+1).
\]
The explicit representation of the associated Jacobi polynomials is due to
Wimp~\cite[p.~182]{Ismail}, however our specific case allows to write these polynomials using the simple expressions
\[
    f_k(z) %=\widehat P_k^{(-n,-n)}(z;n)
    =
    \frac{(-1)^k P_k(-z)+P_k(z)}{2}
\quad\text{and}\quad
    g_k(z) %=\widehat P_k^{(-n,-n)}(z;n+1)
    =
    \frac{(-1)^kP_k(-z)- P_k(z)}{2n}.
\]
Note that the sequence~$\{f_k\}_{k=0}^{n-1}$ consists of the so-called (monic) exceptional Jacobi polynomials~\cite[p.~97]{Ismail}, although the parameter~$\alpha=-n$ is non-standard for them.
Observe also that $f_k$ and $-ng_k$ turn into $Q$ and $q$ from~\eqref{even.odd.parts} on letting~$k=n$.

%%%%%%%%%%%%%%%%%%%%%%%%%%%%%%%%%%%%%%%%%%%%%%%%%%%%%%%%%%%%%%
\subsection{Orthogonality}\label{subsection:orthogonality}
%%%%%%%%%%%%%%%%%%%%%%%%%%%%%%%%%%%%%%%%%%%%%%%%%%%%%%%%%%%%%%

Now, let us introduce the following linear functional
$\mathcal{L}_{n}^{a}(f(x))$ acting on $(n-1)$th differentiable
functions $f$ and depending on the positive integer number~$n$ and
the (complex) nonzero number $a$:
\[
\mathcal{L}_{n}^{a}(f(x)) = \sum_{k=1}^{n}\dfrac{f^{(k-1)}(a)\alpha_{k}}{(k-1)!} \label{linear.functional}
=\sum_{k=1}^{n}\dfrac{f^{(k-1)}(a)}{(k-1)!}\dfrac{q^{(n-k)}(a)}{(n-k)!}
= \dfrac1{(n-1)!}\dfrac{d^{n-1}[f(x)q(x)]}{dx^{n-1}}\Big|_{x=a} \, .
\]
% \begin{eqnarray}
% \mathcal{L}_{n}^{a}(f(x)) & = & \sum_{k=1}^{n}\dfrac{f^{(k-1)}(a)\alpha_{k}}{(k-1)!} \label{linear.functional} %\\& = &
% =\sum_{k=1}^{n}\dfrac{f^{(k-1)}(a)}{(k-1)!}\dfrac{q^{(n-k)}(a)}{(n-k)!} \nonumber \\
% & = & \dfrac1{(n-1)!}\dfrac{d^{n-1}[f(x)q(x)]}{dx^{n-1}}\Big|_{x=a} \, . \nonumber
% \end{eqnarray}
%
%\begin{eqnarray}
%\mathcal{L}_{n}^{a}(f(x)) & = &
%\sum_{k=1}^{n}\dfrac{f^{(k-1)}(a)\alpha_{k}}{(k-1)!} \nonumber\\ [3pt]
%& = &
%\sum_{k=1}^{n}\dfrac{f^{(k-1)}(a)}{(k-1)!}\dfrac{q^{(n-k)}(a)}{(n-k)!} \nonumber \\ [3pt]
%& = & \dfrac1{(n-1)!}\dfrac{d^{n-1}[f(x)q(x)]}{dx^{n-1}}\Big|_{x=a}\, ,
%\end{eqnarray}
%
where the numbers $\alpha_k$ are defined in~\eqref{alpha_k}.

Alternatively, assuming that $f$ is analytic near the point $a$ this functional may be expressed in a more usual form --- as an integral with a continuous (complex-valued) Jacobi weight $(z-a)^{-n}(z+a)^n$ over the circle $|z-a|=2|a|$, which is typical for the Jacobi polynomials with such parameters, cf.\ \cite[p.~222]{Geronimus1940}.
The weight is analytic in~$\mathbb C\setminus\{0\}$, so the circle may be replaced with any Jordan curve enclosing the point $z=a$.
Due to Cauchy's integral formula, this integral will essentially express the same functional $\mathcal{L}_{n}^{a}$, namely:
\[
\begin{split}
\mathcal{L}_{n}^{a}(f)&= \dfrac1{(n-1)!}\dfrac{d^{n-1}[f(z)q(z)]}{dz^{n-1}}\Big|_{z=a}
=
\frac 1{2\pi i}
\oint\limits_{|z-a|=2|a|} \frac{f(z)q(z)}{(z-a)^n} dz
\\
&=
\frac 1{2\pi i}
\oint\limits_{|z-a|=2|a|} f(z) \cdot\frac 1{2}\left(
\left(\frac{z-a}{z-a}\right)^n -\left(\frac{z+a}{z-a}\right)^n\right) dz
\\
&=
\frac {i}{4\pi}\oint\limits_{|z-a|=2|a|} f(z) \left(\frac{z+a}{z-a}\right)^n dz
.
\end{split}
\]

With this functional we can represent the rational function $R(z)$
defined in~\eqref{rat.func.general} as follows
\begin{equation}\label{Cauchy.integral}
R(z)=\mathcal{L}_{n}^{a}\left(\dfrac1{z-x}\right)=\sum_{k=1}^{n}\dfrac{\alpha_{k}}{(z-a)^{k}}\, .
\end{equation}
Moreover, from~\eqref{linear.functional} we have
\begin{equation*}\label{moments.functional}
\mathcal{L}_{n}^{a}(x^m)=s_m\, , \quad \mbox{for}\; m=0,1,\ldots,
\end{equation*}
where $s_m$ are the moments defined in~\eqref{s_k}. In fact,
\begin{eqnarray*}\label{moments.functional.proof}
\mathcal{L}_{n}^{a}(x^m) & = & \sum_{k=1}^{n}\dfrac{(x^m)^{(k-1)}\Big|_{x=a}}{(k-1)!}\, \alpha_{k} %\\ & = &
=a^m\alpha_1+\sum_{k=2}^{n}\dfrac{m(m-1)\cdots(m-k+2)a^{m-k+1}}{(k-1)!}\, \alpha_{k}\\
\\
& = & \sum_{k=1}^{n}\binom{m}{k-1}a^{m-k+1}\alpha_{k}=s_m\, .
\end{eqnarray*}

%\begin{eqnarray*}
%\mathcal{L}_{n}^{a}(x^m) & = &
%\sum_{k=1}^{n}\dfrac{(x^m)^{(k-1)}\Big|_{x=a}}{(k-1)!}\, \alpha_{k}
%\nonumber
%\\ & = & a^m\alpha_1+\sum_{k=2}^{n}\dfrac{m(m-1)\cdots(m-k+2)a^{m-k+1}}{(k-1)!}\, \alpha_{k} \nonumber \\ &
%= &  \sum_{k=1}^{n}\binom{m}{k-1}a^{m-k+1}\alpha_{k} \nonumber \\ & = &
%s_m\, . %\label{moments.functional.proof}
%\end{eqnarray*}

We now prove that the polynomials $P_k(z)$, for~$k=0,1,\ldots,n-1$, defined in~\eqref{orthogonal.polynomials} are
orthogonal w.r.t. the functional~\eqref{linear.functional},
that is,
\begin{equation*}\label{orthogonality}
\mathcal{L}_{n}^{a}(P_m(x)P_k(x))=C_m\delta_{km} \ \ \quad \text{for}\quad
m,k=0,1,\ldots,n-1\, ,
\end{equation*}
where $\delta_{km}$ is the Kronecker symbol. To this end, we find the three-term recurrence relations for monic polynomials~$Q_m(z)$
orthogonal w.r.t. the functional $\mathcal{L}_n^a(f)$ and compare the obtained relations with~\eqref{orthogonal.polynomials}. Firstly,
let us note that all the minors $D_{m}(R)$, $m=1,\ldots,n$, defined in~\eqref{Hankel.determinants.1} are nonzero, i.e.,
\begin{equation*}
D_{m}(R)\neq0\, , \quad \mbox{for}\; m=1,\ldots,n,
\end{equation*}
by~\eqref{Hankel.determinants.3}, so such a sequence exists (see, e.g.,~\cite[Theorem 3.1]{C1978}). These orthogonal polynomials
have the form~\cite[p.~21]{Ismail}
\begin{equation}\label{orthopoly.functional}
Q_{m}(z)=\dfrac1{D_{m}(R)}\begin{vmatrix}
    s_0 &s_1 &s_2 &\dots &s_{m}\\
    s_1 &s_2 &s_3 &\dots &s_{m+1}\\
    \vdots&\vdots&\vdots&\ddots&\vdots\\
    s_{m-1} &s_{m} &s_{m+1} &\dots &s_{2m-1}\\
    1 &z &z^2 &\dots &z^{m}\\
\end{vmatrix}\, ,
\end{equation}
for $m=1,\ldots,n$. The roots of the last polynomial $Q_{n}(z)$ (partially)
define the functional $\mathcal{L}_{n}^{a}(f)$. If $\mathcal{L}_{n}^{a}(f)$ is
a positive or negative functional, $Q_{n}(z)$ would have exactly $n$
real distinct roots that are the growth point of the measure
generated by $\mathcal{L}_{n}^{a}(f)$, see, e.g.,~\cite[Section 2.2]{Ismail}. But the
formula~\eqref{Hankel.determinants.3} shows that
$\mathcal{L}_{n}^{a}(f)$ is neither a positive nor a negative
functional, since the minors $D_{m}(R)$ are not all positive nor of
alternating signs $(\mbox{sgn}\, D_{m}(R)\neq(-1)^{m}$, for
$m=1,\ldots,n$).

The polynomials $Q_m(z)$ satisfy a three-term recurrence relation
\begin{equation*}
Q_{m+1}(z)=(z-c_{m+1})Q_m(z)-d_{m+1}Q_{m-1}(z).
\end{equation*}
It is well known, e.g.,~\cite{C1978,HT2012,Ismail,W1948}, that
\begin{equation}\label{three-term.rec.rel.ortopoly.Q.coeff.b}
d_{m+1}=\dfrac{D_{m-1}(R)D_{m+1}(R)}{D^2_m(R)}=-b_m,
\end{equation}
so the polynomials $Q_m(z)$ satisfy the recurrence relation
\begin{equation}\label{three-term.rec.rel.ortopoly.Q}
Q_{m+1}(z)=(z-c_{m+1})Q_m(z)+b_mQ_{m-1}(z)\, , \quad \mbox{for}\; m=0,1,\cdots,n-1,
\end{equation}
where $Q_{-1}(z)=0$ and $Q_{0}(z)=1$.

Furthermore, if $$Q_m(z)=z^m+\displaystyle\sum_{i=1}^{m}A_{im}z^{m-i}\,
,$$ then (cf., e.g.,~\cite{C1978})
\begin{equation}\label{three-term.rec.rel.coeff.c}
c_1=A_{11}\, , \quad c_{m+1}=A_{1m}-A_{1,m+1}\, , \quad \mbox{for}\; m=1,\ldots,n-1\, ,
\end{equation}
and, from~\eqref{orthopoly.functional}, we have
\begin{equation}\label{orthopoly.functional.2nd.coef}
A_{1,m}=-\dfrac{D'_{m}(R)}{D_{m}(R)}\, , \quad \mbox{for}\; m=1,\ldots,n.
\end{equation}
where
\begin{equation*}\label{shifted.Hankel.determinant}
D'_{m}(R)=
\begin{vmatrix}
    s_0 &s_1 &s_2 &\cdots &s_{m-2}&s_{m}\\
    s_1 &s_2 &s_3 &\cdots &s_{m-1}&s_{m+1}\\
    \vdots&\vdots&\vdots& \cdots &\vdots&\vdots\\
    s_{m-1} &s_{m} &s_{m+1} & \cdots & s_{2m-3} &s_{2m-1}\\
\end{vmatrix}\, , \quad \mbox{for}\; m=1,\ldots,n\, .
\end{equation*}

To find the formula for the minors $D'_{m}(R)$, we need the
following lemma which can be proved in the same way as Lemma~\ref{Lemma.Hankel.minors.equivalency}.

\begin{lemma}
The minors $D'_{m}(R)$ can be represented in the following form:
\begin{eqnarray}\label{Hankel.determinants.222}
D'_{m}(R) & = &\begin{vmatrix}
    \alpha_1 &\alpha_2 &\alpha_3 &\cdots &\alpha_m\\
    \alpha_2 &\alpha_3 &\alpha_4 &\cdots &\alpha_{m+1}\\
    \vdots&\vdots&\vdots&\cdots&\vdots\\
    \alpha_{m-1} &\alpha_{m} &\alpha_{m+1} &\cdots &\alpha_{2m-2}\\
    \alpha_{m+1} &\alpha_{m+2} &\alpha_{m+3} &\cdots &\alpha_{2m}
\end{vmatrix} \\ \nonumber
& & \; +\, am
\begin{vmatrix}
    \alpha_1 &\alpha_2 &\alpha_3 &\cdots &\alpha_m\\
    \alpha_2 &\alpha_3 &\alpha_4 &\cdots &\alpha_{m+1}\\
    \vdots&\vdots& \vdots&\cdots&\vdots\\
    \alpha_{m-1} &\alpha_{m} &\alpha_{m+1} &\cdots &\alpha_{2m-2}\\
    \alpha_{m} &\alpha_{m+1} &\alpha_{m+2} &\cdots &\alpha_{2m-1}
\end{vmatrix},
\end{eqnarray}
for $m=1,\dots,n$, and setting $\alpha_j=0$, for $j>n$.
\end{lemma}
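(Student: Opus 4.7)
The plan is to follow the proof of Lemma~\ref{Lemma.Hankel.minors.equivalency} almost verbatim, keeping track of a single correction caused by the fact that the last column of $D'_{m}(R)$ is the shifted column $\mathbf{C}_{m+1}$ with entries $s_m,s_{m+1},\dots,s_{2m-1}$ rather than $\mathbf{C}_m$. Columns $\mathbf{C}_1,\dots,\mathbf{C}_{m-1}$ of $D'_{m}(R)$ coincide with those of $D_{m}(R)$, so to them I apply the very same transformation~\eqref{Lemma.Hankel.minors.proof.2}, obtaining $\widetilde{\mathbf{C}}_k$ for $k=1,\dots,m-1$. For the last column I would like to use the natural extension
\[
    \widetilde{\mathbf{C}}_{m+1}=\sum_{j=1}^{m+1}(-a)^{m+1-j}\binom{m}{j-1}\mathbf{C}_j,
\]
but $\mathbf{C}_m$ is not available inside $D'_{m}(R)$. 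So I only add the available columns, i.e.\ I replace $\mathbf{C}_{m+1}$ by $\mathbf{C}_{m+1}+\sum_{j=1}^{m-1}(-a)^{m+1-j}\binom{m}{j-1}\mathbf{C}_j$, which equals $\widetilde{\mathbf{C}}_{m+1}+am\,\mathbf{C}_m$, because the omitted summand at $j=m$ is $(-a)\binom{m}{m-1}\mathbf{C}_m=-am\,\mathbf{C}_m$.

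Next, by multilinearity of the determinant in its last column, I split $D'_{m}(R)$ into two pieces: one in which the last column is $\widetilde{\mathbf{C}}_{m+1}$, and one, carrying the extra factor $am$, in which the last column is $\mathbf{C}_m$. In the second piece the unitriangular structure of the column operations already performed on $\mathbf{C}_1,\dots,\mathbf{C}_{m-1}$ lets me add suitable multiples of $\widetilde{\mathbf{C}}_1,\dots,\widetilde{\mathbf{C}}_{m-1}$ so that $\mathbf{C}_m$ turns into $\widetilde{\mathbf{C}}_m$ without altering the determinant; the result is precisely the column-transformed form of $D_{m}(R)$ appearing in~\eqref{Lemma.Hankel.minors.proof.4}.

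Both intermediate determinants now have the structure handled in Lemma~\ref{Lemma.Hankel.minors.equivalency}, so I apply the row operation~\eqref{Lemma.Hankel.minors.proof.7} and invoke the same binomial identity, which collapses each entry to $\alpha_{k+\ell-1}$, where $k$ is the ``column index''. For the $am$-piece this directly yields $am$ times the Hankel determinant of $[\alpha_{i+j-1}]_{i,j=1}^{m}$ from~\eqref{Hankel.determinants.2}, matching the second term in~\eqref{Hankel.determinants.222}. For the first piece the shifted column has ``column index'' $m+1$, so its entries become $\alpha_{m+\ell}$; then transposing and exploiting the symmetry $\alpha_{i+j-1}=\alpha_{j+i-1}$ of the Hankel array puts the shift on the last row and reproduces the first determinant in~\eqref{Hankel.determinants.222}.

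The main obstacle is recognising the ``missing column'' correction: that the partial column operation applied to $\mathbf{C}_{m+1}$ differs from the natural full transformation by exactly $-am\,\mathbf{C}_m$, so that multilinearity of the determinant produces the extra $am$ term predicted by the statement. Once this is spotted, all remaining ingredients---row operation, binomial identity, column-index shift, and symmetry of the Hankel form---are identical to those in the earlier lemma and require no new computation.
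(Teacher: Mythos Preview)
Your argument is correct and is precisely the adaptation of the proof of Lemma~\ref{Lemma.Hankel.minors.equivalency} that the paper has in mind when it says the lemma ``can be proved in the same way.'' The only new ingredient beyond that lemma is the missing-column correction $-am\,\mathbf{C}_m$, which you identify and handle cleanly via multilinearity; the subsequent row operations and the transposition step (exploiting the Hankel symmetry to move the shift from the last column to the last row) are straightforward.
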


Furthermore, by the same technique that we used in proof of Theorem~\ref{Theorem.Hankel.minors.exact},
we may conclude the next theorem (here, we omit the proof to avoid long and unnecessary repetitions).

\begin{theorem}\label{Thm.sec.3}
The following identity holds for $m=1,\dots,n$,
\begin{equation}\label{Hankel.determinants.2222}
\begin{vmatrix}
    \alpha_1 &\alpha_2 &\alpha_3 &\cdots &\alpha_m\\
    \alpha_2 &\alpha_3 &\alpha_4 &\cdots &\alpha_{m+1}\\
    \vdots&\vdots&\vdots&\cdots&\vdots\\
    \alpha_{m-1} &\alpha_{m} &\alpha_{m+1} &\dots &\alpha_{2m-2}\\
    \alpha_{m+1} &\alpha_{m+2} &\alpha_{m+3} &\dots &\alpha_{2m}
\end{vmatrix}=
a(n-m)
\begin{vmatrix}
    \alpha_1 &\alpha_2 &\alpha_3 &\cdots &\alpha_m\\
    \alpha_2 &\alpha_3 &\alpha_4 &\cdots &\alpha_{m+1}\\
    \vdots&\vdots&\vdots&\cdots&\vdots\\
    \alpha_{m-1} &\alpha_{m} &\alpha_{m+1} &\cdots &\alpha_{2m-2}\\
    \alpha_{m} &\alpha_{m+1} &\alpha_{m+2} &\cdots &\alpha_{2m-1}
\end{vmatrix},
\end{equation}
where we set $\alpha_j=0$, for $j>n$, and for $m=1$ we mean that the left determinant equals~$\alpha_2$.
\end{theorem}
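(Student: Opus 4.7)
My plan is to mimic the proof of Theorem~\ref{Theorem.Hankel.minors.exact}, applying its sequence of determinantal transformations in parallel to the two sides of~\eqref{Hankel.determinants.2222}, carefully tracking how the shifted last row propagates through each stage.

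First, I would substitute $\alpha_k=-2^{k-1}a^k\binom{n}{k}$ into both determinants and extract the appropriate row and column scalars: $-a^i 2^{i-1}$ from the $i$th row for $i<m$, $-a^{m+1}2^m$ from the LHS last row, $-a^m 2^{m-1}$ from the RHS last row, and $(2a)^{j-1}$ from the $j$th column. Balancing the extracted factors, the identity is reduced to $\widetilde T_m=\tfrac{n-m}{2}\,T_m$, where $T_m=\det[\binom{n}{i+j-1}]_{i,j=1}^m$ is as in~\eqref{Theorem.Hankel.minors.exact.proof.2} and $\widetilde T_m$ is the same determinant but with last row $[\binom{n}{m+j}]_{j=1}^m$. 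The factor $(n-m)$ arises naturally from the row-factoring step of Theorem~\ref{Theorem.Hankel.minors.exact}, since the common factor in the shifted last row is $(n)_{m+1}=(n-m)(n)_m$ rather than $(n)_m$.

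Next, I would apply the same iterated Pascal-identity column operations. The first $m-1$ rows transform identically in both determinants, and the shifted last row telescopes in exactly the same way---the identity~\eqref{Theorem.Hankel.minors.exact.proof.4} applies verbatim with the index $i$ shifted to $m+1$---so both sides acquire the same extracted column product $\prod_{\ell=1}^{m-1}(n+\ell)^{m-\ell}$. Consequently the statement collapses to the purely numerical identity $\det\widetilde A=\tfrac12\det A$, where $A=[1/(i+j-1)!]_{i,j=1}^m$ and $\widetilde A$ is $A$ with its last row replaced by $[1/(m+j)!]_{j=1}^m$.

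For this final step I would consider the $(m+1)\times m$ rectangular matrix $C=[1/(i+j-1)!]$, so that $\det A$ and $\det\widetilde A$ appear as the $m\times m$ minors $M_{m+1}$ and $M_m$ obtained by deleting the last and the penultimate rows respectively. The rows of $C$ satisfy a unique (up to scale) linear dependency $\sum_{i=1}^{m+1}c_i r_i=0$, whose coefficients $c_i=(-1)^{i-1}(m+i-1)!/[(i-1)!(m-i+1)!]$ are identified via the Chu--Vandermonde identity
\[
\sum_{k=0}^m(-1)^k\binom{m+k}{k}\binom{m}{k}\,\frac{k!}{(k+j)!}=\frac{1}{j!}\,{}_2F_1(m+1,-m;j+1;1)=\frac{(j-m)_m}{j!\,(j+1)_m},
\]
which vanishes for $j=1,\dots,m$ because $(j-m)_m$ contains the factor $0$. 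The standard cofactor identity then gives $M_m/M_{m+1}=-c_m/c_{m+1}$, which evaluates to $\tfrac12$ by a direct arithmetic calculation. The main obstacle is this last step: after the earlier reductions no routine row/column operation on $A$ and $\widetilde A$ directly produces the factor $\tfrac12$, and one must instead exploit the Chu--Vandermonde-induced linear dependency among the rows of~$C$.
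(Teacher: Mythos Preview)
Your proposal is correct and follows essentially the same route the paper indicates: the paper omits the proof of Theorem~\ref{Thm.sec.3} entirely, saying only that it proceeds ``by the same technique that we used in the proof of Theorem~\ref{Theorem.Hankel.minors.exact}'', and your argument does exactly that---extracting the scalar row/column factors, pulling the extra $(n-m)$ from the shifted last row, and carrying the Pascal-identity column operations through both determinants in parallel. Your final step, reducing to $\det\widetilde A=\tfrac12\det A$ and dispatching it via the Chu--Vandermonde linear dependency among the rows of $C=[1/(i+j-1)!]$, is a clean way to close the residual numerical identity that the paper's technique leaves behind.
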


So, from~\eqref{Hankel.determinants.222} and~\eqref{Hankel.determinants.2222} we have
\begin{equation*}
D'_{m}(R)=an
\begin{vmatrix}
    \alpha_1 &\alpha_2 &\alpha_3 &\dots &\alpha_m\\
    \alpha_2 &\alpha_3 &\alpha_4 &\dots &\alpha_{m+1}\\
    \vdots&\vdots&\vdots&\cdots&\vdots\\
    \alpha_{m-1} &\alpha_{m} &\alpha_{m+1} &\dots &\alpha_{2m-2}\\
    \alpha_{m} &\alpha_{m+1} &\alpha_{m+2} &\dots &\alpha_{2m-1}
\end{vmatrix},
\end{equation*}
i.e., $$D'_{m}(R)=an D_{m}(R)\, .$$

Now,~\eqref{Hankel.determinants.2} and~\eqref{orthopoly.functional.2nd.coef} imply
\begin{equation*}
A_{1,m}=-an\, , \quad \mbox{for}\; m=1,2,\ldots,n\, .
\end{equation*}
Hence
\begin{equation}\label{three-term.rec.rel.ortopoly.Q.coeff.c}
c_1=an \quad \mbox{and} \quad c_{m}=0\, , \quad \mbox{for}\; m=2,\ldots,n\, .
\end{equation}
by~\eqref{three-term.rec.rel.coeff.c}.

Thus, from~\eqref{three-term.rec.rel.ortopoly.Q.coeff.b}--\eqref{three-term.rec.rel.ortopoly.Q}
and~\eqref{three-term.rec.rel.ortopoly.Q.coeff.c} we have
\begin{equation*}\label{orthogonal.polynomials2}
Q_{m+1}(z)=zQ_{m}(z)+\dfrac{a^2(n^2-m^2)}{(2m-1)(2m+1)}Q_{m-1}(z)\, , \quad \mbox{with}\;
m=1,\ldots,n\, ,
\end{equation*}
with initial conditions $Q_{0}(z)=1$, and $Q_1(z)=z-an$. Therefore,
the monic polynomials~$P_k(z)$ coincide with the monic polynomials~$Q_m(z)$
which are orthogonal with respect to the functional $\mathcal{L}_{n}^{a}(f(x))$ defined
in~\eqref{linear.functional} and
\begin{eqnarray*}
C_m  &= &  \mathcal{L}_{a}^{n}(P^2_m(x)) %\\ &= &
=b_0\prod_{k=1}^{m}(-b_k)
%\\&  = &
=(-1)^{m+1}a^{2m+1}n\prod_{k=1}^{m}\dfrac{(n^2-k^2)}{4k^2-1}\\
& = & (-1)^{m+1}a^{2m+1}\binom{n+m}{2m+1}\,\dfrac{(2m)!!}{(2m-1)!!}\, ,
\end{eqnarray*}
%
%\begin{eqnarray*}
%C_m & = & \mathcal{L}_{a}^{n}(P^2_m(x))
%\\ & = & b_0\prod_{k=1}^{m}(-b_k) \nonumber \\ & = & (-1)^{m+1}a^{2m+1}n\prod_{k=1}^{m}\dfrac{(n^2-k^2)}{4k^2-1}
%\\ & = & (-1)^{m+1}a^{2m+1}\binom{n+m}{2m+1}\,\dfrac{(2m)!!}{(2m-1)!!}\,
%,
%\end{eqnarray*}
or, equivalently,
\begin{equation*}\label{norm.2}
C_m=(-1)^{m+1}a^{2m+1}\sqrt{\pi}\binom{n+m}{2m+1}\,
\dfrac{\Gamma(m+1)}{\Gamma\left(m+\tfrac12\right)},
\end{equation*}
for $m=0,1,\ldots,n-1$.

For example, setting $m=n-1$, we have the following interesting fact:
\begin{eqnarray*}
C_{n-1} & = & \mathcal{L}_{a}^{n}(P^2_{n-1}(x))
%\\ & = &
(-1)^{n}a^{2n-1}\sqrt{\pi}\,
\dfrac{\Gamma(n)}{\Gamma\left(n-\tfrac12\right)}\sim(-1)^{n}a^{2n-1}\sqrt{\pi
n}\quad\text{as}\quad n\to+\infty.
\end{eqnarray*}

Having the moments $s_m$ defined in~\eqref{s_k} which are related to the
orthogonal polynomials~$\{P_m(z)\}_{m=0}^{n-1}$, one can introduce the distributional
weight function $w_a$ characterised by those moments (see~\cite{Morton.Krall.1978})
such that
\begin{equation}\label{functional.w}
\langle w_a,f\rangle=\sum\limits_{m=0}^{\infty}\dfrac{s_mf^{(m)}(0)}{m!}
\end{equation}
for infinitely differentiable function $f$ with a compact support. Therefore, the distributional
weight function $\omega_a$ of this functional has the form~\cite{Morton.Krall.1978}
\begin{equation}\label{distrib.weight}
\omega_a(x)=\sum\limits_{m=0}^{\infty}\dfrac{(-1)^ms_m\delta^{(m)}(x)}{m!}\, ,
\end{equation}
where $\delta(x)=\delta^{(0)}(x)$ is the Dirac delta-function, and $\delta^{(m)}(x)$, for $m=1,2,\ldots$,
are its derivatives. From~\eqref{s_k} it easily follows that the moments $s_m$ satisfy the inequality
\begin{equation*}
|s_m|\leqslant C\,|a|^m\, m!\, ,
\end{equation*}
for certain constants $C=C(a,n)>0$, and the distributional weight $w_a(x)$ has the inverse Fourier transform~\cite{Morton.Krall.1978}
\begin{equation*}
F^{-1}w_a(t)=\dfrac1{2\pi}\int\limits_{-\infty}^{+\infty}e^{-ixt}\omega_a(x)dx=\dfrac1{2\pi}\sum\limits_{m=0}^{+\infty}\dfrac{s_m(-it)^m}{m!}\, .
\end{equation*}
The sum of this series can be found explicitly.
\begin{theorem}
The inverse Fourier transform of the distributional weight~\eqref{distrib.weight} corresponding to the functional $w_{a}$
has the form
$$
F^{-1}w_a(t)=\dfrac{-ae^{-iat}}{2\pi}\,L^{(1)}_{n-1}(2iat),\quad t\in\mathbb{C}\, ,
$$
where $L^{(\alpha)}_{n}(x)$ denotes the (generalised) Laguerre polynomial.
\end{theorem}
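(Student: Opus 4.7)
The plan is to directly compute the series~$F^{-1}w_a(t)=\frac 1{2\pi}\sum_{m=0}^{\infty}\frac{s_m(-it)^m}{m!}$ by substituting the explicit expression
\[
    s_m=-a^{m+1}\sum_{k=1}^{n}2^{k-1}\binom{m}{k-1}\binom{n}{k}
\]
from~\eqref{s_k} and then matching the resulting polynomial against the standard series for the Laguerre polynomial~$L^{(1)}_{n-1}$. The growth bound~$|s_m|\le C|a|^m m!$ already noted in the text guarantees absolute convergence on compacts, so all rearrangements are legal.

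First, I would substitute and swap the order of the two sums (finite over~$k$, infinite over~$m$) to obtain
\[
    F^{-1}w_a(t)=\frac{-a}{2\pi}\sum_{k=1}^{n}2^{k-1}\binom{n}{k}\sum_{m=k-1}^{\infty}\binom{m}{k-1}\frac{(-iat)^m}{m!}.
\]
Next, I would evaluate the inner sum by shifting $m=j+k-1$ and cancelling factorials; the resulting sum in~$j$ is exactly the exponential series, giving
\[
    \sum_{m=k-1}^{\infty}\binom{m}{k-1}\frac{(-iat)^m}{m!}=\frac{(-iat)^{k-1}}{(k-1)!}\,e^{-iat}.
\]

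Plugging this back and setting $\ell=k-1$ yields
\[
    F^{-1}w_a(t)=\frac{-ae^{-iat}}{2\pi}\sum_{\ell=0}^{n-1}\binom{n}{\ell+1}\frac{(-2iat)^\ell}{\ell!}.
\]
The last step is to recognise this finite sum as~$L^{(1)}_{n-1}(2iat)$ via the standard expansion
\[
    L^{(\alpha)}_N(x)=\sum_{\ell=0}^{N}\binom{N+\alpha}{N-\ell}\frac{(-x)^\ell}{\ell!},
\]
which for~$\alpha=1$ and~$N=n-1$ gives precisely~$\sum_{\ell=0}^{n-1}\binom{n}{\ell+1}\frac{(-x)^\ell}{\ell!}$ upon rewriting~$\binom{n}{n-1-\ell}=\binom{n}{\ell+1}$. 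Comparing the two expressions term by term establishes the claim.

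The proof is essentially a bookkeeping exercise; there is no real obstacle beyond tracking the signs and constants carefully. The only point requiring a comment is the justification of swapping the two summations, which is immediate from the bound $|s_m|\le C|a|^m m!$ already recorded just before the theorem statement, since this makes the double series absolutely convergent for every~$t\in\mathbb C$ (even though individually the inner series over~$m$ only converges thanks to the polynomial weight~$\binom{m}{k-1}/m!$).
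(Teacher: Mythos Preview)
Your proof is correct and follows essentially the same route as the paper's: substitute the explicit formula~\eqref{s_k} for~$s_m$, swap the finite sum over~$k$ with the series over~$m$, evaluate the inner series as~$\frac{(-iat)^{k-1}}{(k-1)!}e^{-iat}$, and identify the remaining finite sum with~$L^{(1)}_{n-1}(2iat)$. You are simply more explicit than the paper about the index shift and the Laguerre expansion used in the final identification.
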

\begin{proof}
From~\eqref{s_k} one has
\begin{eqnarray*}
\displaystyle F^{-1}w_a(t) & = & \dfrac{-a}{2\pi}\sum\limits_{m=0}^{\infty} \sum\limits_{k=1}^{n}2^{k-1}a^{m}\binom{m}{k-1}\binom{n}{k}\dfrac{(-it)^m}{m!}\\
& =  & \dfrac{-a}{2\pi}\sum\limits_{k=1}^{n}\dfrac{2^{k-1}}{(k-1)!} \binom{n}{k}\sum\limits_{m=0}^{\infty}\dfrac{(-ait)^m}{(m-k+1)!}
\\ & = &
\dfrac{-ae^{-ait}}{2\pi}\sum\limits_{k=0}^{n-1}\dfrac{1}{k!}\binom{n}{k+1}(-2ait)^{k}
% \\ & = &
=\dfrac{-ae^{-iat}}{2\pi}\,L^{(1)}_{n-1}(2iat)\, ,
\end{eqnarray*}
where the series converges for $t\in\mathbb{C}$.
\end{proof}

In~\eqref{functional.w} we defined the functional $w_a$ for infinitely differentiable functions with compact support.
In fact it will be extended to a larger class of functions. Indeed, let us find the functional $w_a$ by applying
the Fourier transform to $F^{-1}w_a(t)$.
\begin{equation*}
\begin{split}
F\left(F^{-1}w_a(t)\right)
& =  -\dfrac{a}{2\pi}F\left(e^{-iat}\,L^{(1)}_{n-1}(2iat)\right)\\[3pt]
&=  \sum\limits_{k=1}^{n}\dfrac{-2^{k-1}a^k\binom{n}{k}}{(k-1)!} \,\dfrac{(-i)^{k-1}F(e^{-iat}\,t^{k-1})}{2\pi} %\\ & = &
=\sum\limits_{k=1}^{n}\dfrac{\alpha_k\delta^{(k-1)}(x-a)}{(k-1)!}\, ,
\end{split}
\end{equation*}
where $\alpha_k$ are defined in~\eqref{alpha_k}. Thus, we can continuously extend the functional $w_a$ to the set of all $(n-1)$-times differentiable functions.
From~\eqref{linear.functional} it follows that the continuously extended functional $w_a$ coincides with $\mathcal{L}_a^n$.

Note that since the matrix $J_n$ defined in~\eqref{main.matrix} is tridiagonal, it is non-derogatory. Thus, the normalized (by the first unit entry)
eigenvector corresponding to the unique eigenvalue of $J_n$ is $\mathbf{u}_0:=\mathbf{u}(a)$ where $\mathbf{u}(z)=(P_0(z),P_1(z),\ldots,P_{n-2}(z),P_{n-1}(z))^T$.
The corresponding generalised eigenvectors are $\mathbf{u}_k:=\dfrac{d^k\mathbf{u}(z)}{dz^k}\Bigg|_{z=a}$, $k=1,\ldots,n-1$.

%\setcounter{equation}{0}
%%%%%%%%%%%%%%%%%%%%%%%%%%%%%%%%%%%%%%%%%%%%%%%%%%%%%%%%%%%%%%%%%%%%%%
\subsection{Relation to Bessel polynomials}\label{section:Bessel}
%%%%%%%%%%%%%%%%%%%%%%%%%%%%%%%%%%%%%%%%%%%%%%%%%%%%%%%%%%%%%%%%%%%%%%

The Bessel polynomials $\{B_n(x)\}_{n=0}^{\infty}$ is a sequence of polynomials satisfying the differential
equation
\begin{equation*}
x^2\dfrac{d^2y}{dx^2}+(2x+2)\dfrac{dy}{dx}=n(n+1)y.
\end{equation*}
They occur naturally in the theory of traveling spherical waves~\cite{KrallFrink.1948}. These (monic) polynomials
satisfy the following three-term recurrence relations
\begin{equation}\label{orthogonal.polynomials.Bessel}
\begin{array}{l}
B_{0}(z)=1,\quad B_1(z)=z+1,\\
\\
B_{k+1}(z)=zB_{k}(z)+\dfrac{1}{4k^2-1}B_{k-1}(z)\, , \quad \mbox{for}\;
k=1,2,\ldots
\end{array}
\end{equation}
The Bessel polynomials were introduced by H.L. Krall and O. Frink in~\cite{KrallFrink.1948}. They satisfy complex orthogonality conditions
\begin{equation*}
\int\limits_{C}B_{n}(z)B_{m}(z)e^{-2/z}dz=0,\qquad\quad m\neq n,
\end{equation*}
where $C$ is a closed path in the complex plane enclosing the origin. Their orthogonality on the real axis
was a subject of discussion for many years until this problem was resolved in a few different ways,
see~\cite{Duran,Evans_et_al.1993,Morton.Krall.1978}. We note that on the real line, the support of
the functional of orthogonality for the Bessel polynomials consists of a unique point $z=0$ which is an essential
singularity. Moreover, from the recurrence relations~\eqref{orthogonal.polynomials.Bessel}, it follows that every
monic Bessel polynomial is the characteristic polynomial of a Schwarz matrix defined by these relations and, therefore,
is stable. Since the polynomials $\{P_k\}_{k=0}^{n-1}$ satisfying~\eqref{orthogonal.polynomials}
and orthogonal w.r.t. the functional $\mathcal{L}_a^n$ defined in~\eqref{linear.functional} are
characteristic polynomials of Schwarz matrices stable for $a<0$, and the support of the functional $\mathcal{L}_a^n$
consists of one point, we may say that $\{P_k\}_{k=0}^{n-1}$ can be treated as a finite discrete analogue of Bessel
polynomials. Additionally, if one sets $a=-\dfrac1n$, then the three-term recurrence relations~\eqref{orthogonal.polynomials}
for the polynomials $P_k(z)$ take the form
\begin{equation*}
\begin{array}{l}
P_{0}(z)=1,\quad P_1(z)=z+1,\\[5pt]
P_{k+1}(z)=zP_{k}(z)+\dfrac{P_{k-1}(z)}{4k^2-1}-\dfrac1{n^2}\,\dfrac{k^2}{4k^2-1}P_{k-1}(z)\, , \quad \mbox{for}\;
k=1,\ldots,n-1,
\end{array}
\end{equation*}
from which it follows that as $n\to+\infty$, these recurrence relations are close to~\eqref{orthogonal.polynomials.Bessel}.
Thus, the polynomials $P_k(z)$ approach $B_k(z)$ coefficient-wise, and, therefore,
on the compact subsets of the complex plane. So for $a=-\dfrac1{n}$, sufficiently large $n$, and relatively small~$k$ (comparing with $n$)
one can consider the polynomials $P_k(z)$ as an approximation of the monic Bessel polynomials $B_k(z)$.
Thus, we can expect that for a fixed $k$ and large $n$ the zeroes of $P_k(z)$ will approximate zeroes of $B_k(z)$, and
as $k\to+\infty$ and $n\to+\infty$ with $\dfrac{k}{n}\to0$ the zeroes of $P_k(z)$ will approach the same curve as the
zeroes of $B_k(z)$ for $k\to+\infty$, see~\cite{deBruin.Saff.Varga.1,deBruin.Saff.Varga.2} and references there.
Figures~\ref{Pic.50.Bes} and~\ref{Pic.75.Bes} show the zeroes of the polynomials $B_{50}(z)$ and $B_{75}(z)$,
respectively, and the zeroes of $P_{50}(z)$ and $P_{75}(z)$ approaching them as $n\to+\infty$.

\begin{figure}[h]
\begin{multicols}{2}
\hfill
\includegraphics[scale=0.40]{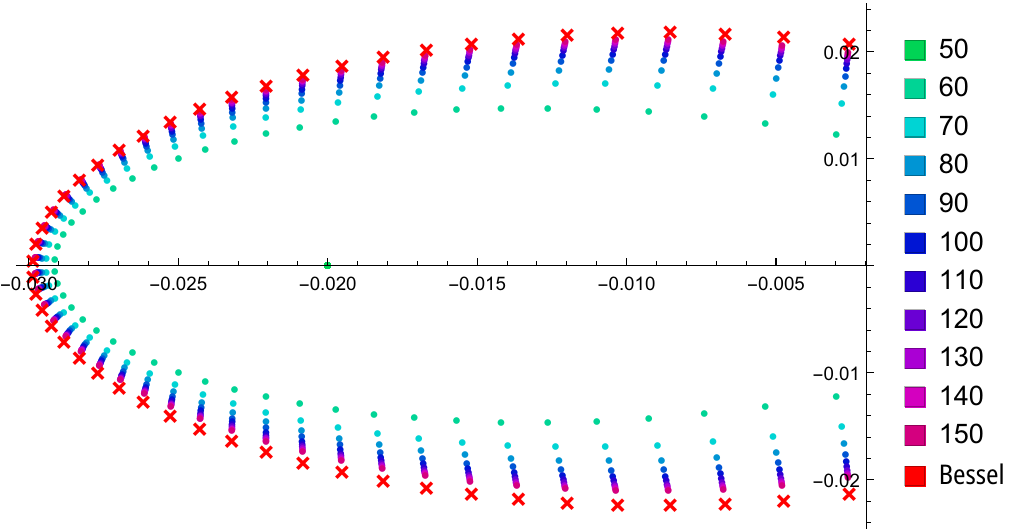}
\hfill\hfill
\caption{Zeroes of $B_{50}$ and $P_{50}$ for $n=50,\dots,150$}\label{Pic.50.Bes}
\hfill
\includegraphics[scale=0.40]{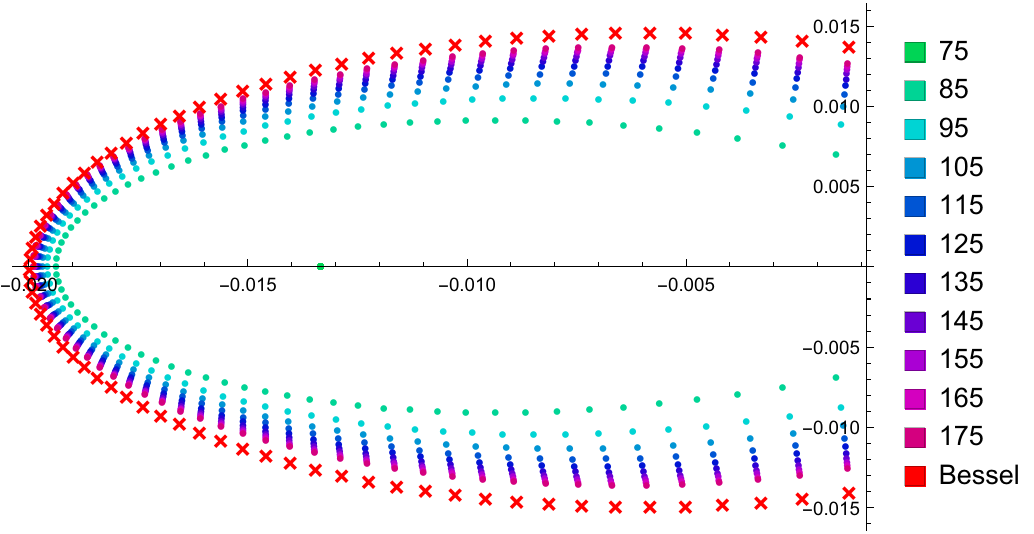}
\hfill\hfill
\caption{Zeroes of $B_{75}$ and $P_{75}$ for $n=75,\dots,175$}\label{Pic.75.Bes}
\end{multicols}
\end{figure}

Figures~\ref{Pic.k2.Bes} and~\ref{Pic.k2x.Bes} represent the zeroes of the polynomials
$P_{k}(z)$ and $B_{k}(z)$ for~$n=k^2$ and~$k=5,\ldots,24$. There is a visibly good approximation
of~$B_{k}(z)$ by~$P_k(z)$ in such a case.

\begin{figure}[h]
\begin{multicols}{2}
\hfill
\includegraphics[scale=0.40]{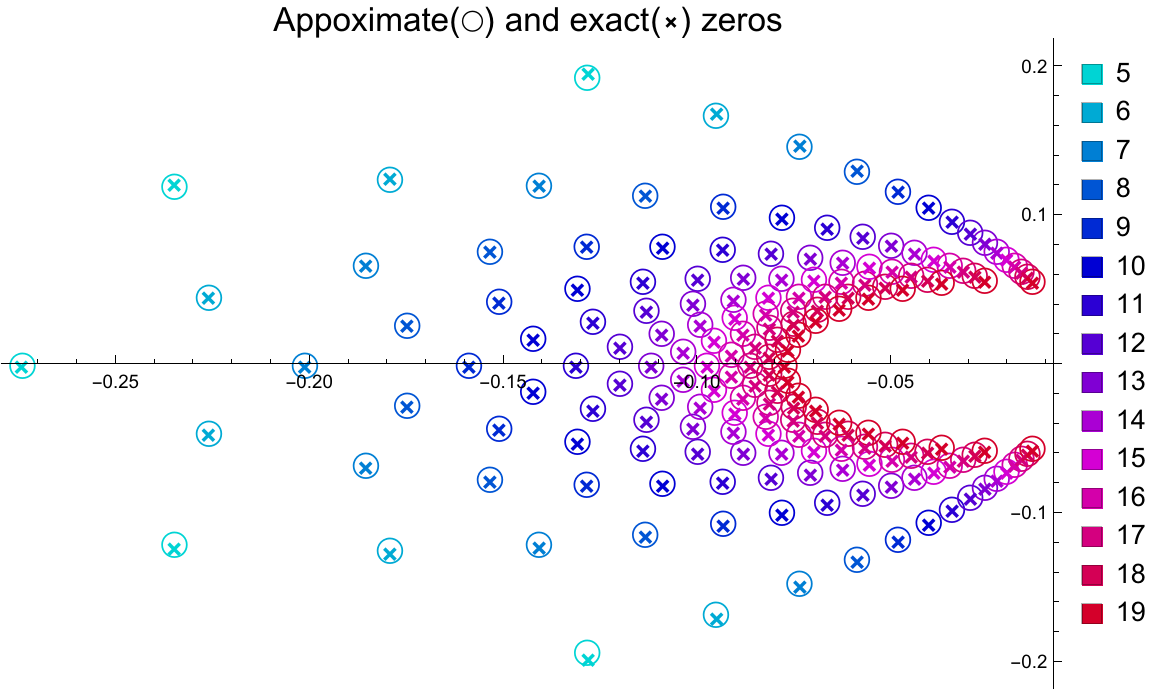}
\hfill\hfill\hfill\hfill
\caption{Zeroes of $P_{k}$ and $B_{k}$ for $n=k^2$ and $k=5,\ldots,19$}\label{Pic.k2.Bes}
\hfill
\includegraphics[scale=0.40]{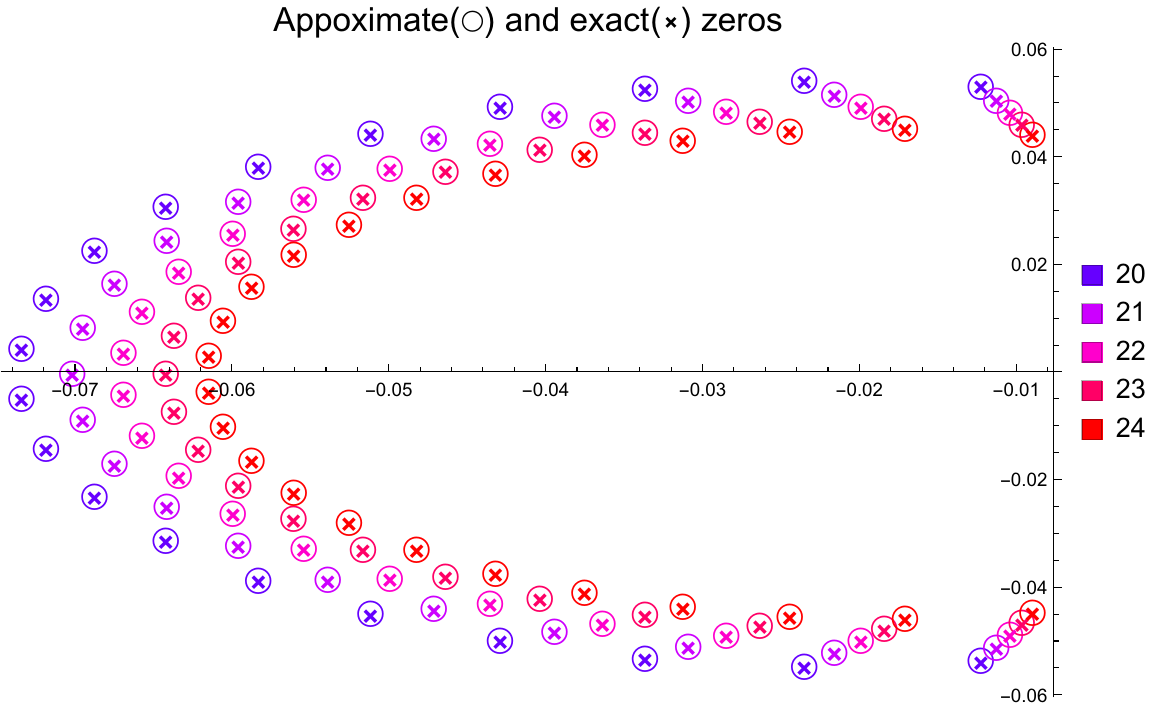}
\hfill\hfill
\caption{Zeroes of $P_{k}$ and $B_{k}$ for $n=k^2$ and $k=20,\ldots,24$}\label{Pic.k2x.Bes}
\end{multicols}
\end{figure}

Note also that if $a=-\dfrac1n$, then as $n\to+\infty$ the limits of the moments $s_m(n)$, $m=0,1,2,\ldots$,
are close to the moments of the Bessel polynomials (but not the same). At the same time,
the inverse Fourier transform of the distributional weight function
corresponding to the Bessel polynomials has the form
$$
F^{-1}w(t)=-\dfrac1{2\pi}\sum\limits_{k=0}^{+\infty}\dfrac{2^{k+1}(it)^k}{k!(k+1)!}=-\dfrac{I_1(\sqrt{8it})}{\pi\sqrt{8it}},
$$
where $I_1(x)$ is the modified Bessel function~\cite{Morton.Krall.1978}.

%\setcounter{equation}{0}
%%%%%%%%%%%%%%%%%%%%%%%%%%%%%%%%%%%%%%%%%%%%%%%%%%%%%%%%%%%%%%%%%%%%%%
\subsection{Zeroes of polynomials $P_k(z)$ for fixed $a$}\label{section:Zeroes}
%%%%%%%%%%%%%%%%%%%%%%%%%%%%%%%%%%%%%%%%%%%%%%%%%%%%%%%%%%%%%%%%%%%%%%

Apart of approximation of Bessel polynomials, it is interesting to study how the zeroes
the polynomials $P_k$ behave for a fixed $a\in\mathbb{C}\setminus\{0\}$.
As it was already observed in Section~\ref{sect:P_k.basic.properties},
the results on the polynomials~$P_k$ for a certain fixed~$a\in\mathbb{C}\setminus\{0\}$
immediately extend to other values of~$a$ due to
\[
a^{-k}P_k(az) = P_k(z)\big\vert_{a=1}
\quad\text{or, equivalently,}\quad
(-a)^{-k}P_k(-az) = P_k(z)\big\vert_{a=-1}.
\]
Accordingly, varying values of~$a$ only scales and/or rotates the zeroes of these polynomials in~$\mathbb{C}$, see Figure~\ref{Pic.acx}. We shall concentrate on the case~$a=-1$ which is related to the previous section.
\begin{figure}[h]
\begin{multicols}{2}
\hfill
\includegraphics[scale=0.46]{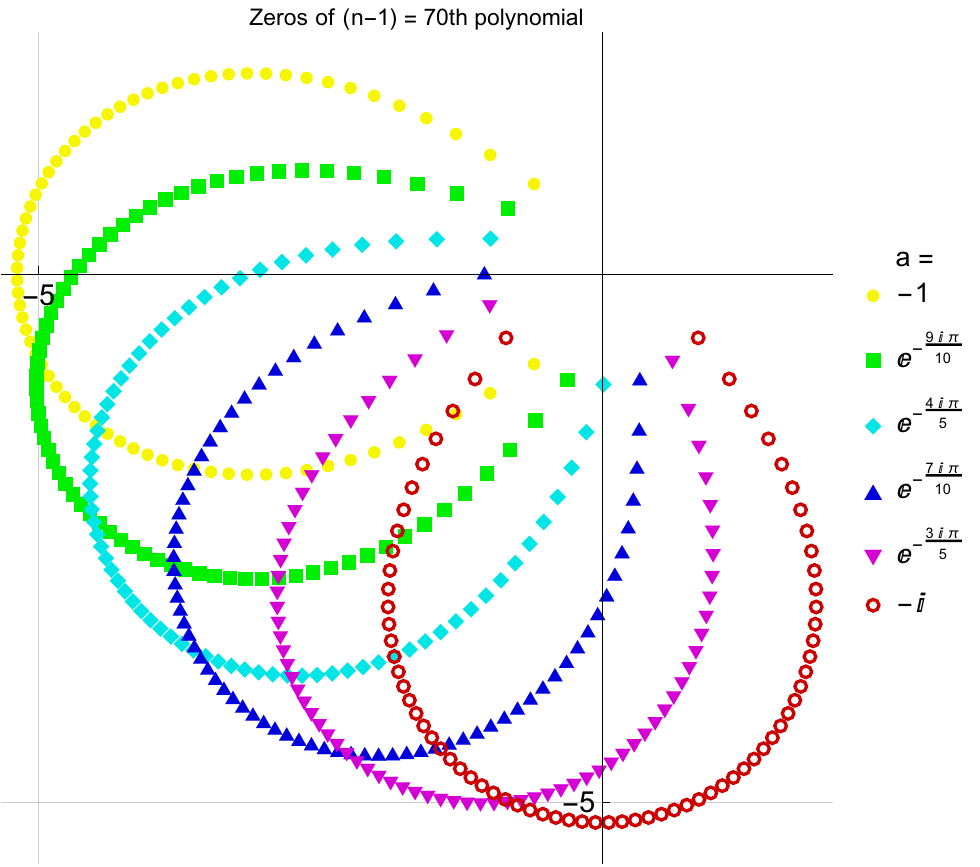}
\hfill
\caption{Zeroes of $P_{k}$ for $a=e^{i\varphi}$ with $n=k+1=71$, and $\varphi=-\pi,\dots,-\frac\pi2$}\label{Pic.acx}
\hfill
\includegraphics[scale=0.4]{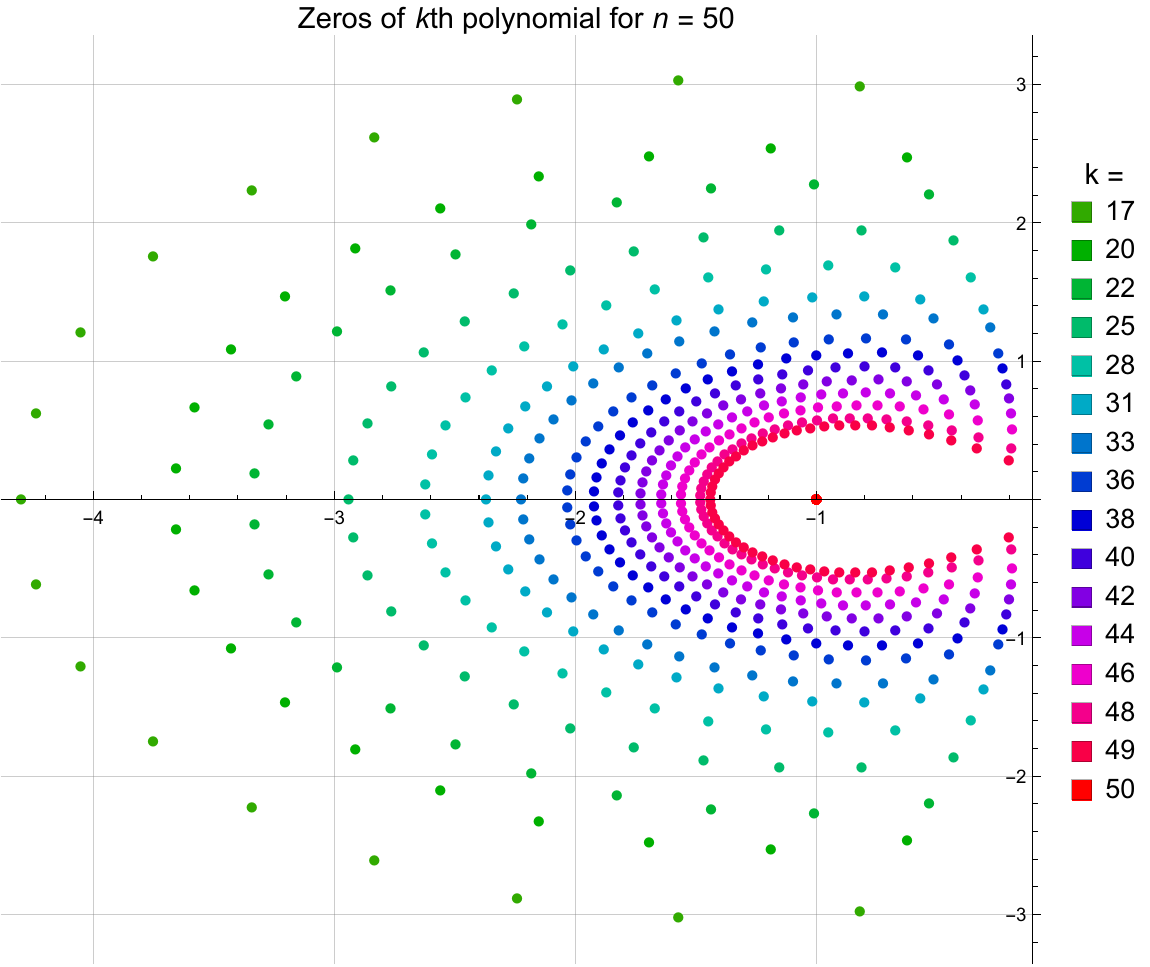}
\hfill\hfill
\caption{Zeroes of $P_{k}$ for $a=-1$, $k=17,\ldots,50$, and $n=50$}\label{Pic.ac}
\hfill
\end{multicols}
\end{figure}

Figures~\ref{Pic.a} and~\ref{Pic.ax}
represent the zeroes of the polynomials $P_{n/10}(z)$ and $P_{n-1}(z)$ for $a=-1$. According to calculations, the zeroes tend to a certain curve in the left half-plane that depends on the limiting ratio~$\dfrac{k}{n}$ as $k\to\infty$, $n>k$.
\begin{figure}[h]
\begin{multicols}{2}
\hfill
\includegraphics[scale=0.39]{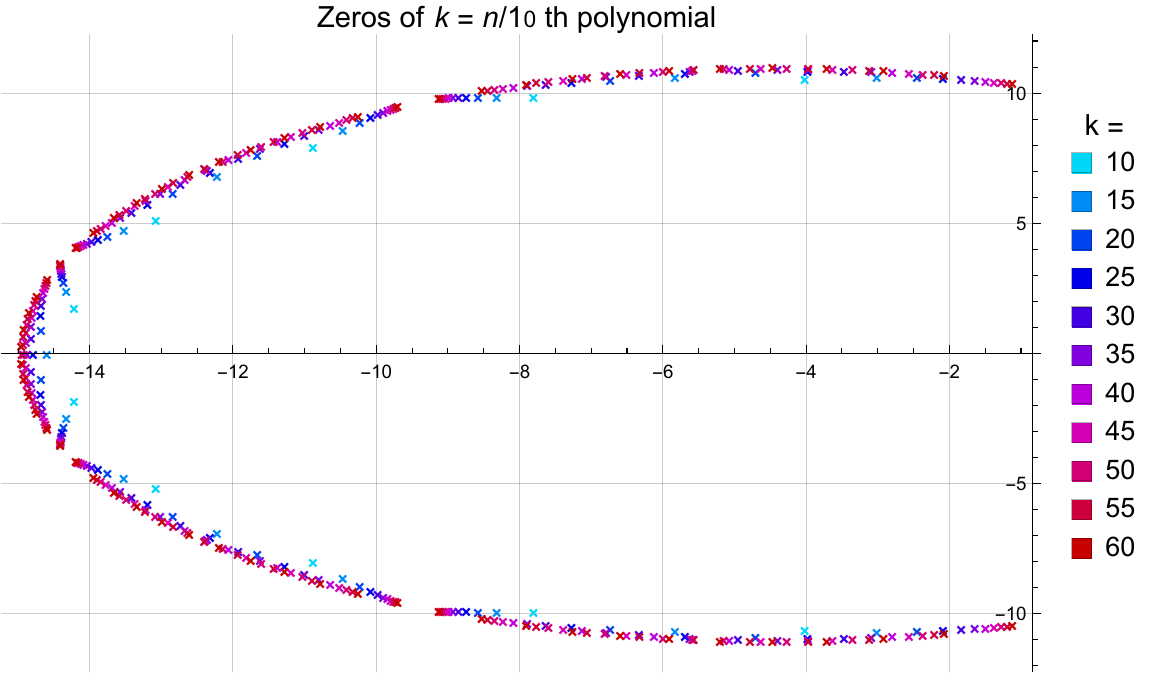}
\hfill\hfill\hfill\hfill
\caption{Zeroes of $P_{k}$ for $a=-1$, $n=10k$, and $k=10,\ldots,60$}\label{Pic.a}
\hfill
\includegraphics[scale=0.39]{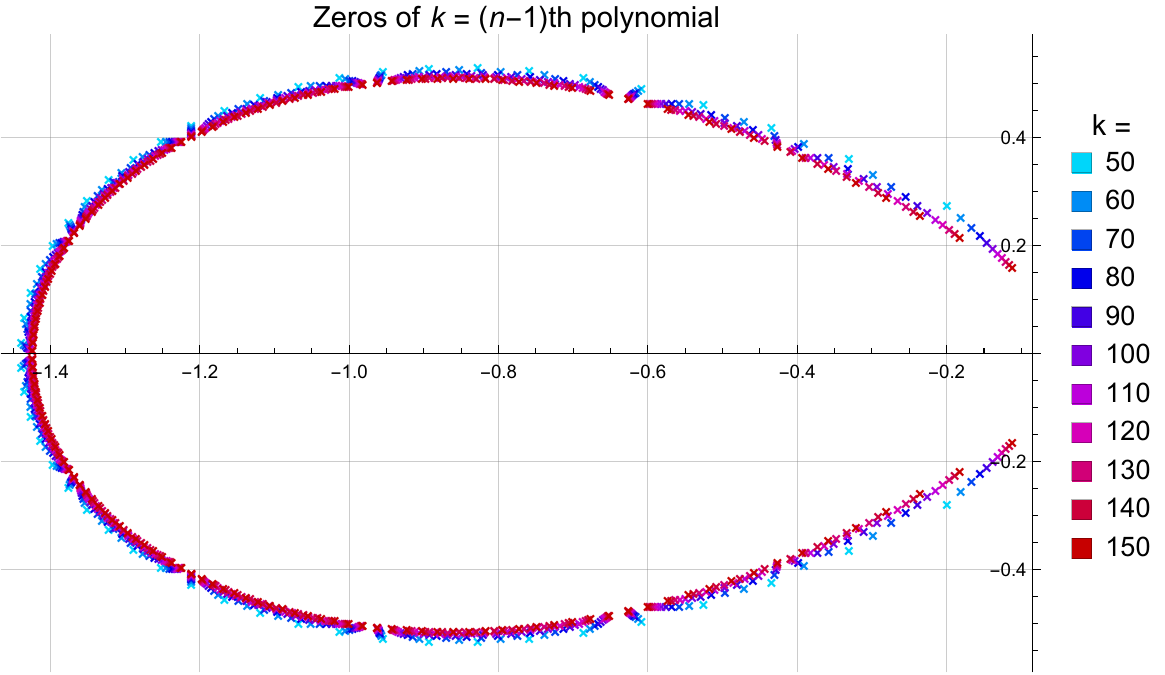}
\hfill\hfill
\caption{Zeroes of $P_{k}$ for $a=-1$, $n=k+1$, and $k=50,\ldots,150$}\label{Pic.ax}
\end{multicols}
\end{figure}

If one changes linear dependence between~$n$ and~$k$ to higher powers, the zeroes of~$P_k$ move to infinity as~$k\to+\infty$, see Figures~\ref{Pic.a.12} and~\ref{Pic.ax.45}.
\begin{figure}[h]
\begin{multicols}{2}
\hfill
\includegraphics[scale=0.39]{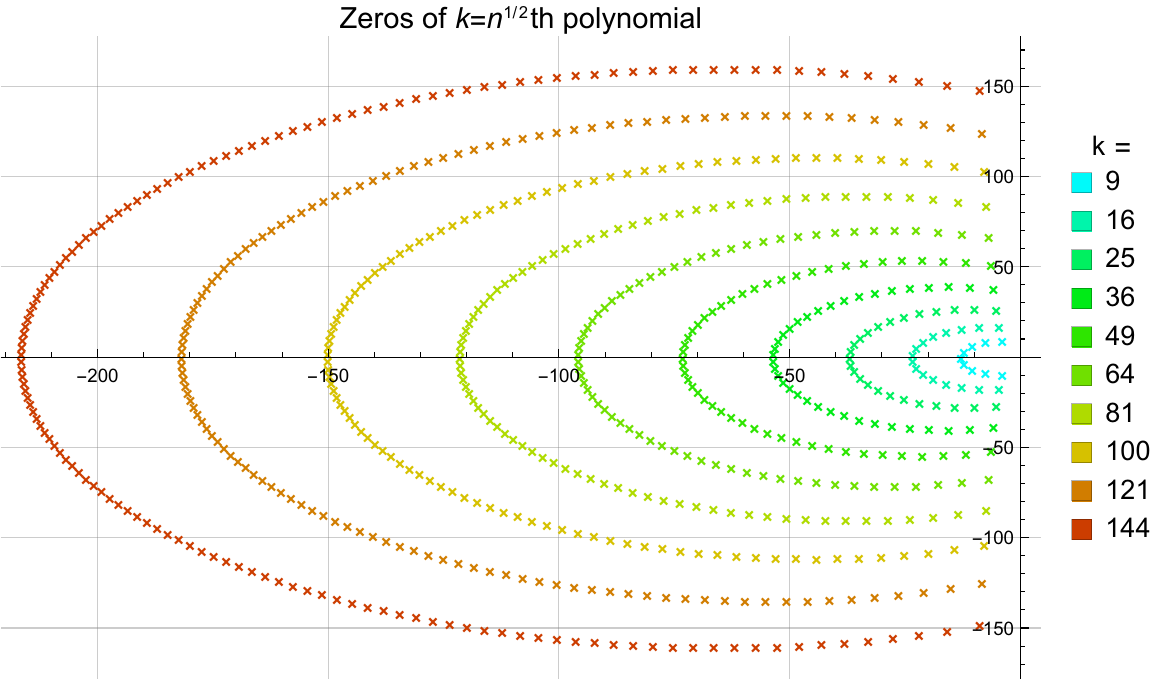}
\hfill\hfill\hfill\hfill
\caption{Zeroes of $P_{k}$ for $a=-1$, $n=k^2$, and $k=9,\ldots,144$}\label{Pic.a.12}
\hfill
\includegraphics[scale=0.39]{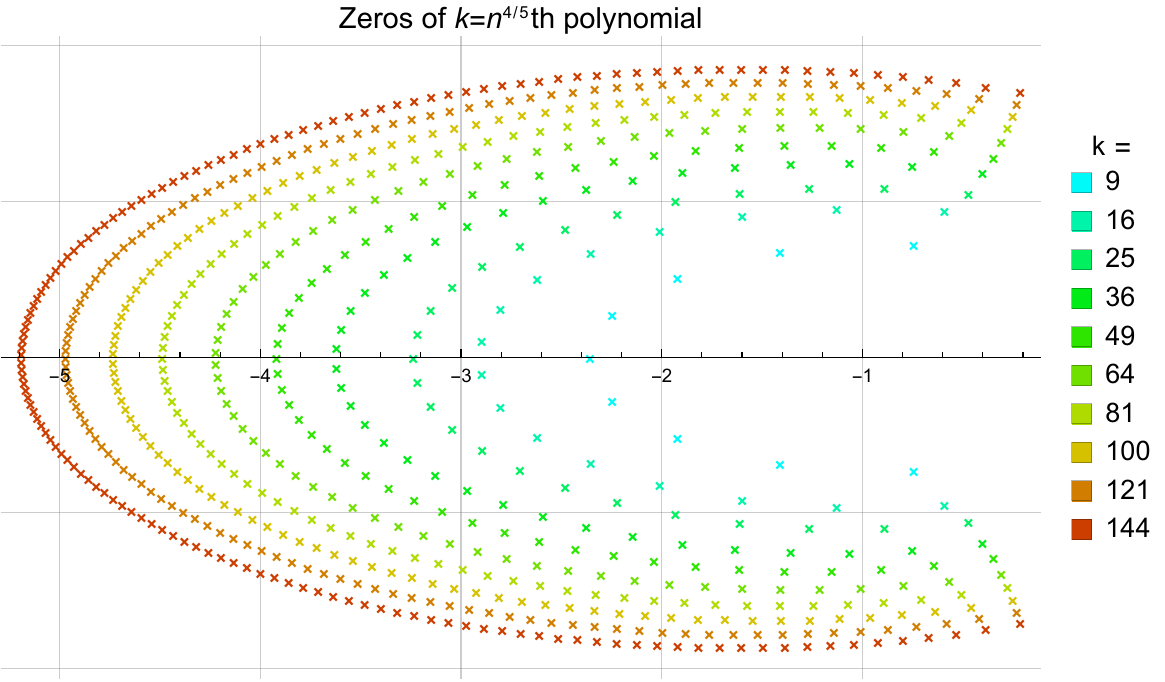}
\hfill\hfill
\caption{Zeroes of $P_{k}$ for $a=-1$, $n= \lfloor k^{5/4}\rfloor$, and $k=9,\ldots,144$}\label{Pic.ax.45}
\end{multicols}
\end{figure}
This phenomenon may be explained as follows. As $\dfrac kn\to0$ we can rescale the polynomials $P_k(z)$ to have the zeroes bounded. The proper rescaling may be achieved by letting $a=-\dfrac1n$ and study the asymptotic behaviour of the zeroes of
$P_k\left(\dfrac zk\right)$, which is equivalent to letting~$a=-\dfrac kn$ and studying the zeroes of~$P_k(z)$, see Figures~\ref{Pic.a.12s} and~\ref{Pic.ax.45s}. From the discussion in Section~\ref{section:Bessel} it follows
that the zeroes of $P_k\left(\dfrac zk\right)$ for $a=-\dfrac1n$ should approximate those
of $B_k\left(\dfrac zk\right)$ as $k\to\infty$, $n\to\infty$, $\dfrac kn\to0$. At the same time, the latter zeroes are known
%to converge to a bounded curve
to remain bounded~\cite{Grosswald.1978,Olver.1954}, see also~\cite{deBruin.Saff.Varga.1,deBruin.Saff.Varga.2}.

\begin{figure}[h]
\begin{multicols}{2}
\hfill
\includegraphics[scale=0.39]{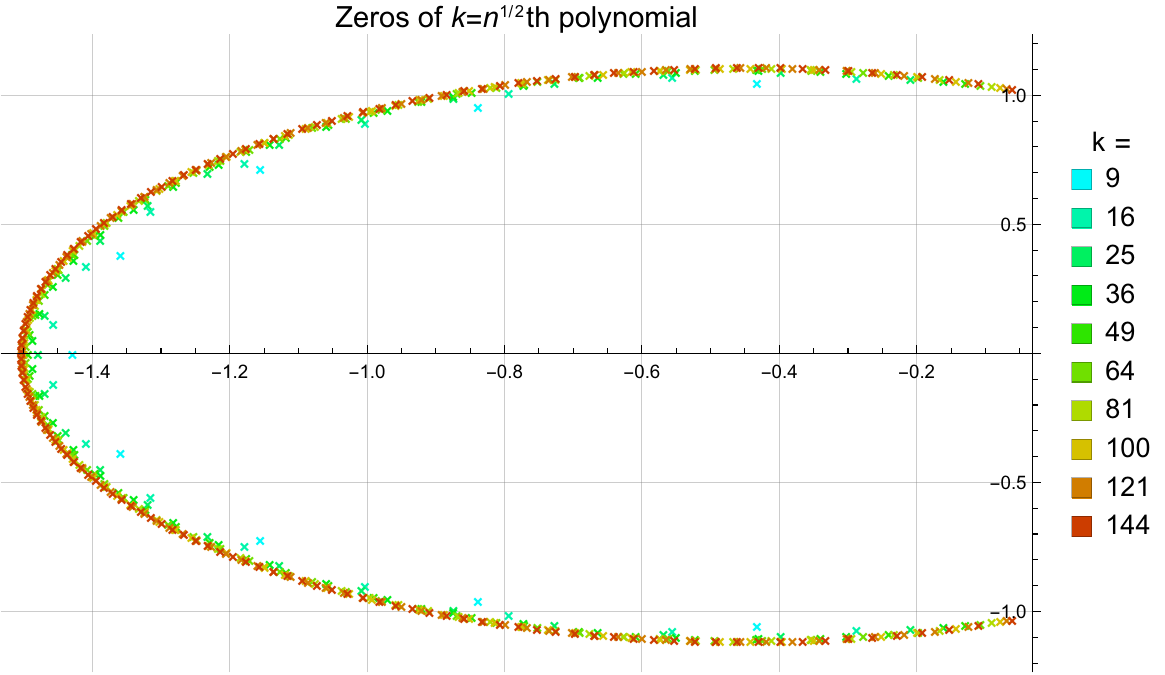}
\hfill\hfill\hfill\hfill
\caption{Zeroes of $P_{k}$ for $a=-k/n$, $n=k^2$, and $k=9,\ldots,144$}\label{Pic.a.12s}
\hfill
\includegraphics[scale=0.39]{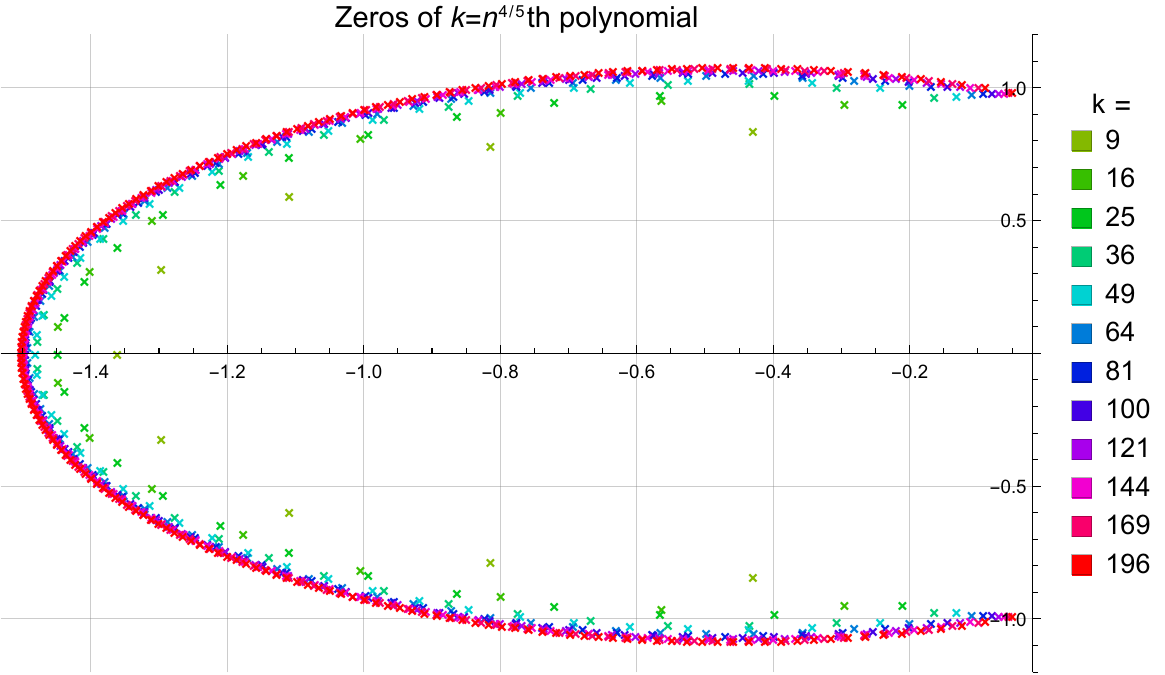}
\hfill\hfill
\caption{Zeroes of $P_{k}$ for $a=-k/n$, $n= \lfloor k^{5/4}\rfloor$, and $k=9,\ldots,144$}\label{Pic.ax.45s}
\end{multicols}
\end{figure}

\setcounter{equation}{0}
%%%%%%%%%%%%%%%%%%%%%%%%%%%%%%%%%%%%%%%%%%%%%%%%%%%%%%%%%%%%%%%%%%%%%%
\section*{Acknowledgement}
%%%%%%%%%%%%%%%%%%%%%%%%%%%%%%%%%%%%%%%%%%%%%%%%%%%%%%%%%%%%%%%%%%%%%%

%The results of Theorems~\ref{Theorem.rat.func} and~\ref{Theorem.Hankel.minors.exact}, Corollary~\ref{Corol.b_k} and Sections~\ref{subsection:orthogonality} and~\ref{section:Bessel} (except Theorem~\ref{Thm.sec.3}) were obtained with the support of the Russian Science Foundation grant~19-71-30002
%(M.\,Tyaglov). A.\,Dyachenko's work (Remarks~\ref{Remark:moments} and~\ref{Remark:Pascal}, Section~\ref{sect:P_k.basic.properties} and numerical calculations) was supported by the state assignment, registration number 122041100132-9.

The results of Theorems~\ref{Theorem.rat.func} and~\ref{Theorem.Hankel.minors.exact}, Corollary~\ref{Corol.b_k} and Sections~\ref{subsection:orthogonality} and~\ref{section:Bessel} (except Theorem~\ref{Thm.sec.3}) were obtained with the support of the Russian Science Foundation grant~19-71-30002. The work on Remarks~\ref{Remark:moments} and~\ref{Remark:Pascal}, Section~\ref{sect:P_k.basic.properties} and all numerical calculations were supported by the state assignment, registration number 122041100132-9.


\begin{thebibliography}{99}
%
\bibitem{Alfaro_et_al.1999}
M.\,Alfaro,
T.\, E.\,P\'erez,
M.\,A.\,Pi\~nar,
M.\,L.\,Rezola,
{Sobolev orthogonal polynomials:
the discrete-continuous case},
\textit{Methods Appl.\ Anal.},
~\textbf{6}, no.~4, 1999, 593--616.
%
\bibitem{BG-M1980}
G.\,Baker, Jr. and P.\,Graves-Morris, \textit{Pad\'e approximants},  Vol. 1, Encyclopedia of Mathematics and Applications 13, Addison-Wesley Publ. Co., 1980.
%
\bibitem{Behn_et_al}
A.\,Behn, K.\,R.\,Driessel, I.\,R.\,Hentzel, K.\,A.\,Vander Velden, and J.\,Wilson, Some nilpotent, tridiagonal matrices with a special sign pattern, \textit{Linear Algebra Appl.},~\textbf{436}, no.~12, 2012, 4446--4450.
%
\bibitem{deBruin.Saff.Varga.1}
M.G.\,de Bruin, E.B.\,Saff, and R.S\,Varga, On the generalized Bessel polynomials. I, \textit{Indag. Math.},~\textbf{84}, no.~1, 1981, 1--13.
%
\bibitem{deBruin.Saff.Varga.2}
M.G.\,de Bruin, E.B.\,Saff, and R.S\,Varga, On the generalized Bessel polynomials. II, \textit{Indag. Math.},~\textbf{84}, no.~1, 1981, 14--25.
%
\bibitem{CHJ2013}
M.B.\,Can, R.\,Howe, and M.\,Joyce, Unipotent invariant matrices, \textit{Linear Algebra Appl.}, \textbf{439}, 2013, 196--210.
%
\bibitem{CdFP2024}
K. Castillo, C.M. da Fonseca, and J. Petronilho, On Chebyshev polynomials and the inertia of certain matrices, \textit{Appl. Math. Comput.},~\textbf{467}, 2024, no.~128497.
%
\bibitem{CH2004}
V.\,\v{C}ern\'y and J.\,Hru\v{s}\'ak, On some new similarities between non-linear observer and filter design, \textit{Nonlinear Control Syst.}, \textbf{6}, 2004, 465--470.
%
%
\bibitem{C1978}
T.S.\,Chihara, \textit{An Introduction to Orthogonal Polynomials}, Gordon and Breach, New York, 1978.
%
\bibitem{C1998}
M.T.\,Chu, Inverse eigenvalue problems, \textit{SIAM Rev.}, \textbf{40}, 1998, 1--39.
%
\bibitem{DJOvdD2000}
J.H.\,Drew, C.R.\,Johnson, D.D.\,Olesky, and P.\,van den Driessche, Spectrally arbitrary patterns, \textit{Linear Algebra Appl.},~\textbf{308}, 2000, 121--137.
%
\bibitem{Duran}
A.J. Duran, Functions with given moments and weight functions for orthogonal polynomials, \textit{Rocky Mountain J. Math.}, \textbf{23}, no. 1, 1993, 87--104.
%
\bibitem{EH2003}
L.\,Elsner and D.\,Hershkowitz, On the spectra of close-to-Schwarz matrices, \textit{Linear Algebra Appl.},~\textbf{363}, 2003, 81--88.
%
\bibitem{Evans_et_al.1993}
W.D.\,Evans, W.N.\,Everitt, K.H.\, Kwon, and L.L.\,Littlejohn, Real orthogonalizing weights for Bessel polynomials, \textit{J. Comput. Appl. Math}, \textbf{49}, 1993, 51--57.
%
%\bibitem{Favard}
%F.\,Marcellan and R.\,\'Alvarez-Nodarse, On the “Favard theorem” and its extensions, \textit{J. Comput. Appl. Math.}, \textbf{127}, 2001, 231--254.
%
\bibitem{Gantmakher.1}
F.R. Gantmacher, \textit{The Theory of Matrices}, vol. II, Translated by  K. A. Hirsch, Chelsea Publishing Co., New York, 1959.
%
\bibitem{Garnett_Shader}
C.\,Garnett and B.\,L.\,Shader, A proof of the conjecture: Centralizers, Jacobians and spectrally arbitrary sign patterns, \textit{Linear Algebra Appl.},~\textbf{436}, no.~12, 2012, 4451--4458.
%
\bibitem{Geronimus1940}
{J. Geronimus}, Sur les polyn\^omes orthogonaux relatifs \`a une suite des nombres donn\'ee et sur le th\'eor\`eme de W.~Hahn, \textit{Izv. Akad. Nauk SSSR, Ser. Mat.}, \textbf{4}, no.~2, 1940, 215--228.
%
\bibitem{GKP1998}
R. Gragham, D. Knuth, and O. Patashnik, \textit{Concrete Mathematics}, Advanced Book Program (First ed.), Reading, MA: Addison-Wesley Publ. Co., 1998, pp.~xiv+625.
%
\bibitem{Grosswald.1978}
E.\,Grosswald, \textit{Bessel polynomials}, Lecture Notes in Mathematics, \textbf{698}, Springer-Verlag, New York, 1978, pp.~182.
%
\bibitem{HT2012}
O. Holtz and M. Tyaglov, Structured matrices, continued fractions, and root localization of polynomials, \textit{SIAM Rev.}, \textbf{54}, 2012, 421--509.
%
\bibitem{HXHY2019}
X. Hou, Z. Xiao, Y. Hao, and Q. Yuan, Decomposition of symplectic matrices into products of symplectic unipotent matrices of index $2$, \textit{Electron. J. Linear Algebra}, \textbf{35}, 2019, 497--502.
%
\bibitem{JSW2009}
C.R. Johnson, B.D. Sutton, and A.J. Witt, Implicit construction of multiple eigenvalues for trees, \textit{Linear Multilinear Algebra}, \textbf{57}, 2009, 409--420.
%
\bibitem{Ismail}
M.E.H. Ismail, \textit{Classical and quantum orthogonal polynomials in one variable}, with two chapters by W. Van Assche, with a
foreword by R. Askey, Encyclopedia of Mathematics and its Applications~\textbf{98}, Cambridge University Press, Cambridge, 2005.
%
\bibitem{KrallFrink.1948}
H.L.\,Krall and O.\,Frink, A new class of orthogonal polynomials: The Bessel polynomials, \textit{Trans. Amer. Math. Soc.}, \textbf{65}, 1949, 100--115.
%
\bibitem{KreinNaimark.1937}
M.G.\,Krein and M.A.\,Naimark, The method of symmetric and Hermitian forms in the theory of the
separation of the roots of algebraic equations, \textit{Linear and Multilinear Algebra}, 1981, \textbf{10}, 265--308.
%
\bibitem{Lorentzen}
L. Lorentzen and H. Waadeland, \textit{Continued Fractions with Applications}, North-Holland Publishing Co., Amsterdam, 1992.
%
\bibitem{MA-N2001}
F. Marcell\'an and R. \'Alvarez-Nodarse, On the ``Favard theorem'' and its extensions, \emph{J. Comput. Appl. Math}, \textbf{127}, 2001, 231--254.
%
\bibitem{M1998}
A.W. Mason, Unipotent matrices, modulo elementary matrices, in SL$_2$ over a coordinate ring, \textit{J. Algebra}, \textbf{203}, 1998, 134--155.
%
\bibitem{Morton.Krall.1978}
R.D. Morton and A.M. Krall, Distributional weight functions for orthogonal polynomials, \textit{SIAM J. Math. Anal.}, \textbf{9}, no. 4,  1978, 604--626.
%
\bibitem{Olver.1954}
F.W.J.\,Olver, The asymptotic expansions of Bessel functions of large order, \textit{Phil. Trans. Roy. Soc. London Ser. A}, \textbf{247}, 1954, 338--368.
%
\bibitem{PBM2002}
A. Prudnikov, Yu. Brychkov, and O. Marichev, \textit{Integrals and Series, Vol. 1, Elementary functions}, Phys. and Math. Literature, Moscow, 2002, p. 632.
%
\bibitem{S1956}
H.R. Schwarz,  Ein Verfahren zur Stabilit\"atsfrage bei Matrizen-Eigenwertproblemen,  \textit{Zeitschrift f\"ur angew. Math. Phys.}, \textbf{7}, no. 6, 1956, 473--500.
%
\bibitem{SJA2021}
R.L. Soto, A.I. Julio, and J.H. Alfaro, Permutative universal realizability, \textit{Spec. Matrices}, \textbf{9}, 2021, 66--77.
%
\bibitem{S2019}
B.D. Sutton, Numerical construction of structured matrices with given eigenvalues, \textit{Spec. Matrices}, \textbf{7}, 2019, 263--271.
%
\bibitem{T2012}
M. Tyaglov, Sign patterns of the Schwarz matrices and generalized Hurwitz polynomials, \textit{Electron. J. Linear Algebra}, \textbf{24}, 2012, 215--236.
%
\bibitem{W1945}
H.S. Wall, Polynomials whose zeros have negative real parts, \textit{Amer. Math. Monthly}, \textbf{52}, no. 6, 1945, 308--322.
%
\bibitem{W1948}
H.S. Wall, \textit{Analytic Theory of Continued Fractions},  D. Van Nostrand Company Inc., New York, 1948.
%
\bibitem{Z1969}
H. Zassenhaus, Characterization of unipotent matrices, \textit{J. Number Theory}, \textbf{1}, 1969, 222--230.

\end{thebibliography}
\end{document}